\newtheorem{thm}{Theorem}
\newtheorem{rem}[thm]{Remark}
\newtheorem{lem}[thm]{Lemma}
\newtheorem{que}[thm]{Question}
\newtheorem{defn}[thm]{Definition}
\newtheorem{que-ed}{Question-editing}
\def\Ham{\operatorname{Ham}}
\def\Hameo{\operatorname{Hameo}}
\def\PHam{\operatorname{PHam}}
\def\PHameo{\operatorname{PHameo}}
\def\supp{\operatorname{supp}}
\def\Symp{\operatorname{Symp}}
\def\Sympeo{\operatorname{Sympeo}}
\def\Id{\operatorname{Id}}
\begin{document}

%\title[Uniqueness of Hamiltonian]{Uniqueness of generating Hamiltonians for topological Hamiltonian flows}

\title{Uniqueness of generating Hamiltonians for topological Hamiltonian flows}

\author{Lev Buhovsky$^{1}$, Sobhan Seyfaddini}

%
%%\address{The Mathematical Sciences Research Institute Berkeley, CA 94720-5070\\ USA}
%%\address{Department of Mathematics \\ University of Chicago \\ Chicago \\ Illinois 60637 \\ USA}
%\address{Lev Buhovski \\ School of Mathematical Sciences \\ Tel Aviv University}\email{levbuh@post.tau.ac.il}
%
%\address{Sobhan Seyfaddini \\ Department of Mathematics and their Applications \\ \'Ecole Normale Sup\'erieure}
%\email{sobhan.seyfaddini@ens.fr}

\footnotetext[1]{The first author also uses the spelling ``Buhovski" for his family name.}

\footnotetext[2]{First published in Journal of Symplectic Geometry in Volume 11, Number 1 (2013), published by International Press. Copyright $ \copyright $ 2013 Journal of Symplectic Geometry.}

\maketitle

\begin{abstract}
We prove that a topological Hamiltonian flow as defined by Oh and M\"uller~\cite{OM}, has a unique $L^{(1,\infty)}$ generating topological Hamiltonian function. This answers a question raised by Oh and M\"uller in~\cite{OM}, and improves a result of Viterbo~\cite{V}.
\end{abstract}

\section{Introduction}

Let $(M^{2n},\omega)$ be a symplectic manifold of dimension $2n$, which is closed and connected. Non-degeneracy
of the symplectic form implies that $\omega^n$ is a volume form on $M$.

Throughout the paper we assume that all Hamiltonians are normalized in the following way: given a time dependent
Hamiltonian $ H : [0,1] \times M \rightarrow \mathbb{R} $ we require that $\int_M H(t,x) \omega^n = 0, \forall t\in[0,1]$. For a given open subset $ U \subset M $, we denote by $ \Ham_U(M,\omega) $ the set of all time-$1$ maps of smooth
Hamiltonian flows that coincide with the identity flow on $ M \setminus U $.
We denote by $ C_0^\infty([0,1]\times M) $ the space of all smooth normalized Hamiltonian functions
$ H : [0,1] \times M \rightarrow \mathbb{R} $. The space $ C_0^\infty([0,1]\times M) $ possesses $L^{(1,\infty)}$ norm,
known as the Hofer~\cite{HZ} norm, which is defined as
\[ \Vert H \Vert_{(1,\infty)} = \int_0^1 (\max_x H(t,x) - \min_x H(t,x))\,dt,\]
for $ H \in C_0^\infty([0,1]\times M) $. The completion of $ C_0^\infty([0,1]\times M) $ with respect to the $L^{(1,\infty)}$
norm is denoted by $ L_0^{(1,\infty)}([0,1] \times M) $. We denote by $ C_0^{\infty}(M) $ the space
of smooth functions $ H: M \rightarrow \mathbb{R} $ with $ \int_M H(x) \omega^n = 0 $. We endow $ C_0^\infty(M) $
with the $ L^\infty $ norm: $$ \| H \|_{\infty} = \max_x H(x) - \min_x H(x) .$$ The completion of $ C_0^\infty(M) $ with
respect to the $ L^\infty $ norm is denoted by $ C_0(M) $, and the space $ C_0(M) $ consists of all continuous functions
$ H: M \rightarrow \mathbb{R} $ that satisfy $ \int_M H(x) \omega^n = 0 $.

  We denote by $\PHam(M,\omega)$ the space of smooth Hamiltonian flows. Clearly, given $ \Phi^{t} \in \PHam(M,\omega)$, there exists a unique normalized Hamiltonian $H$, that generates the flow $ \Phi^{t} $.
  The main purpose of this paper is to prove the above uniqueness result for Hamiltonian generators of \emph{topological Hamiltonian paths}, as defined in~\cite{OM}.
  This \emph{``uniqueness of generating Hamiltonians"} turns out to be essential to extending various constructions on spaces $ Ham(M,\omega) $ and $ PHam(M,\omega) $, to the case of topological Hamiltonian flows~\cite{Oh}. For example, uniqueness of the generating Hamiltonian implies that the Oh-Schwarz spectral invariants extend to the space of topological Hamiltonian paths $PHameo(M, \omega)$. Another interesting implication of this uniqueness theorem is that elements of $PHameo(M, \omega)$ corresponding to one-parameter subgroups in the group of Hamiltonian homeomorphisms, $Hameo(M, \omega)$, are generated by autonomous topological Hamiltonians (see the final paragraph in this page for the definitions of $PHameo(M, \omega)$ and $Hameo(M, \omega)$). A corollary of this correspondence is the law of conservation of energy in the present setting.  We refer interested readers to~\cite{Oh} for proofs of the above consequences of the uniqueness theorem.

  The study of continuous symplectic geometry began with the celebrated Eliashberg-Gromov rigidity theorem~\cite{E1, E2, G}, which states that the group $ \Symp(M,\omega)$ of symplectomorphisms of $(M,\omega)$ is $C^0$ closed in the group of diffeomorphisms of $ M $. This theorem motivates the following definition of symplectic homeomorphisms. The group of symplectic homeomorphisms $\Sympeo(M,\omega)$ is defined as the $C^0$ closure of $\Symp(M,\omega)$ in the group of homeomorphisms of $ M $. Extending the notion of Hamiltonian flows turns out to be more complicated.

  In~\cite{OM}, Oh and M\"uller introduce the notions of \emph{topological Hamiltonian paths}, and \emph{Hamiltonian homeomorphisms}. By definition, a continuous path of homeomorphisms $\Phi^t : M \rightarrow M$ is called a \emph{topological Hamiltonian path (or flow), generated by a (topological) Hamiltonian function $H \in L_0^{(1,\infty)}([0,1] \times M)$}, if there exists a sequence of \emph{smooth} Hamiltonian flows, $\Phi^t_{H_i}$, with generating Hamiltonians $H_i\in C_0^\infty([0,1]\times M)$, such that

   $$ \Phi^t = (C^0) \lim_{i \to \infty} \Phi^t_{H_i} ,$$ $$ H = (L^{(1,\infty)}) \lim_{i \to \infty} H_i, $$
that is, the first convergence is in the uniform topology, and the second convergence is in the $L^{(1,\infty)}$ topology. We denote by $\PHameo(M,\omega)$ the space of all pairs $(\Phi^t, H)$ of a topological Hamiltonian flow $ \Phi^t $ and a topological Hamiltonian function $H$, that generates $ \Phi^t $.
The space $ \Hameo(M,\omega) $ of Hamiltonian homeomorphisms is defined to be the set of all time-1 maps of topological Hamiltonian flows.

  \begin{que} \label{Oh's question}
   Does a topological Hamiltonian flow $\Phi^t$ have a unique generating topological Hamiltonian function? In other words, assume we have two (smooth) sequences $(\Phi^t_{H_i}, H_i), (\Phi^t_{K_i}, K_i) \in \PHam(M,\omega)$ satisfying $$ (C^0)\lim \Phi^t_{H_i}=(C^0)\lim \Phi^t_{K_i} = \Phi^t,$$ $$(L^{(1,\infty)})\lim H_i=H,$$
   $$(L^{(1,\infty)})\lim K_i = K.$$  Does this imply $K=H$, as $L^{(1,\infty)}$ functions?
  \end{que}

  This question was raised by Oh and M\"uller~\cite{OM}. The goal of this paper is to give an affirmative answer to the above question.

  Going back to the case of smooth Hamiltonian flows, for given $$ \Phi^t_H, \Phi^t_K \in \PHam(M,\omega) ,$$ generated by smooth Hamiltonians $ H,K $, we have the following well known formulae for the Hamiltonian functions of a composition of flows and an inverse of a flow:

  $\Phi^t_{H}\circ\Phi^t_{K}= \Phi^t_{G}$, where $G = H \# K(t,x):= H(t,x) + K(t,(\Phi^{t}_{H})^{-1}(x))$.

  $(\Phi^t_{H})^{-1} = \Phi^t_{\overline{H}}$, where $\overline{H}(t,x):= -H(t,\Phi^t_{H}(x))$.

  It was shown by Oh and M\"uller~\cite{OM} that these operations admit a natural generalization to the space $ \PHameo(M,\omega) $. It follows that given two pairs $ (\Phi^t, H), (\Phi^t, K) \in \PHameo(M,\omega) $ with common topological Hamiltonian flow, we get the identity flow $ \Id^t = (\Phi^t)^{-1} \circ \Phi^t $ generated by the topological Hamiltonian function $$ \overline{H} \# K(t,x) = - H(t,\Phi^{t}(x)) + K(t,\Phi^{t}(x)).$$

  Hence, question~\ref{Oh's question} simplifies to:

  \begin{que}\label{Main Question}
   Assume we have a sequence of smooth Hamiltonian paths $(\Phi^t_{H_i}, H_i) \in \PHam(M,\omega)$ satisfying
  $$ (C^0)\lim \Phi^t_{H_i}=\Id^t,$$ $$(L^{(1,\infty)})\lim H_i = H.$$  Does this imply $H=0$, as an $L^{(1,\infty)}$ function?
  \end{que}

  In~\cite{V}, Viterbo gives an affirmative answer to the above question assuming $(C^0)\lim H_i=H$. Note that $(C^0)\lim H_i=H$ implies $$(L^{(1,\infty)})\lim H_i=H .$$
The methods employed in this paper are very different than those used in~\cite{V}.

  \begin{rem}
   %Let us remark, that one can find a sequence of Hamiltonian paths $(\Phi^t_{H_k}, H_k) \in \PHam(M,\omega)$ such that
   %$(C^0)\lim \Phi^t_{H_k}=\Id^t$, but the sequence $ ( H_k ) $ does not converge in
   %$L_0^{(1,\infty)}([0,1] \times M)$.

   One can find a sequence of Hamiltonian paths $(\Phi^t_{H_k}, H_k) \in \PHam(M,\omega)$ such that
   $(C^0)\lim \Phi^t_{H_k}=\Id^t$, but the sequence $ ( H_k ) $ does not converge in
   $L_0^{(1,\infty)}([0,1] \times M)$.  To demonstrate this we borrow the following example from~\cite{V}: \\
  Let $ U \subset M $ be a Darboux chart with coordinates $ (x_1, y_1, ..., x_n, y_n) $, such that
  $ 0 = (0,0,...,0,0) \in U $. Let $ r > 0 $ be small enough, such that
  $$ V := B_r(0) = \{  (x_1, y_1, ..., x_n, y_n) \, | \, x_1^2 + y_1^2 + ... + x_n^2 + y_n^2 < r^2 \} \subset U .$$
  Take $H: M \rightarrow \mathbb{R} $ to be any normalized, autonomous and smooth non-zero Hamiltonian supported in $ V $. For any $ k \in \mathbb{N} $,
  define $ H_k : M \rightarrow \mathbb{R} $ by $$ H_k(x_1, y_1, ..., x_n, y_n) = kH(kx_1, ky_1, ..., kx_n, ky_n) $$
  for $ (x_1, y_1, ..., x_n, y_n) \in V_k := B_{\frac{r}{k}}(0) \subset U $, and $ H_k(x) = 0 $ for $ x \in M \setminus V_k $.
  Then the sequence of smooth Hamiltonian paths $(\Phi^t_{H_k})$ $C^0$ converges to $Id^t$, but the sequence of Hamiltonians
  $(H_k)$ diverges.

%   Indeed, take any point $ x_0 \in M $ and consider a sequence of non-zero autonomous
%   smooth normalized Hamiltonians $ K_k : M \rightarrow \mathbb{R} $, such that the supports of $ K_k $ converge to $ x_0 $
%   (or, in other words, for any open neighborhood $ U $ of $ x_0 $, we have $ \supp(K_k) \subset U $ for large $ k $). Then for
%   a sequence $ (c_k)_{i=1,2,...} $ of reals, if we define a sequence of Hamiltonians $ H_k = c_k K_k $ for $ k=1,2,... $, then
%   again we have that the supports of $ H_k $ converge to $ x_0 $, and hence $(C^0)\lim \Phi^t_{H_k}=\Id^t$. However, for a
%   suitable sequence $ (c_k) $ that converges to $ +\infty $ fast enough, we will have
%   $ \lim_{k \rightarrow \infty} \| H_k \|_{(1,\infty)} = + \infty$.

%     It is not true, without the assumption $(L^{(1,\infty)})\lim H_i = H$, that the Hamiltonian $H$ in~\ref{Main Question} is identically zero. To demonstrate this we borrow the following example from~\cite{V}: \\
%  Let $(U, x_1, y_1, \cdots, x_n, y_n)$ denote a Darboux chart on $M$.  Take $H$ to be any normalized, autonomous and smooth Hamiltonian supported in $U$.  Let $H_i(x_1, y_1, \cdots, x_n, y_n) = iH_i(ix_1, iy_1, \cdots, ix_n, iy_n)$ for $(x_1, y_1, \cdots, x_n, y_n) \in U$ and set $H_i = 0$ outside the chart $U$.  Then the sequence of smooth Hamiltonian paths $\Phi^t_{H_i} \text{ } C^0$ converges to $Id$ but the sequence $H_i$ diverges.

  \end{rem}

  Section~\ref{S:Main-result} contains the statement of our main result and a formulation of a sequence of lemmata, that are used in its proof. In Section~\ref{S:Proof-of main-result} we present the proof of the main result. Section~\ref{S:Local-uniqueness} studies the local uniqueness for topological Hamiltonian functions and for topological Hamiltonian flows. Here we state and prove the generalization of Theorem 1.3 from~\cite{Oh-local}, to the $ L^{(1,\infty)} $ case. We derive two consequences of this local uniqueness result. First, on any closed symplectic manifold we construct an example of a continuous function, that fails to be a generator of any topological Hamiltonian flow. Second, we give an example of a continuous flow of homeomorphisms on any closed symplectic manifold, which is a $C^0$ limit of smooth Hamiltonian flows, but is not a topological Hamiltonian flow.

\begin{rem}
 All the results in the present paper can be directly generalized to the case of an open symplectic manifold $ (M,\omega) $, where in this case we consider topological Hamiltonian flows that are generated by {\em compactly supported} topological Hamiltonian functions~\cite{OM}.
\end{rem}

%End of Introduction

\section{Main result} \label{S:Main-result}

In this section we present our main result.

Here's our answer to Question~\ref{Main Question} :

\begin{thm} \label{T:main}
Denote by $ \Id^{t} : M \rightarrow M $ the identity flow.
If we have $ H \in L_0^{(1,\infty)}([0,1] \times M) $, such that $ (\Id^t,H) \in \PHameo(M,\omega) $, then  we have $ H = 0 $ in $ L_0^{(1,\infty)}([0,1] \times M) $.
\end{thm}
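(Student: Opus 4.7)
\bigskip

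\noindent \textbf{Proof plan.} The plan is to argue by contradiction: suppose $H \not\equiv 0$ in $L^{(1,\infty)}$, localize the non-vanishing of $H$ to two disjoint Darboux balls on a positive-measure time set, and derive a contradiction with the uniform convergence $\Phi^t_{H_i} \to \Id^t$.

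First, I would clean up the convergence. Since $\|H_i - H\|_{(1,\infty)} = \int_0^1 \osc_x (H_i - H)(t,\cdot)\,dt \to 0$, after passing to a subsequence the oscillation $\osc_x(H_i - H)(t,\cdot) \to 0$ for a.e.\ $t$; the normalization condition fixes the free constant, so $H_i(t,\cdot) \to H(t,\cdot)$ uniformly in $x$ for a.e.\ $t$. If $H \not\equiv 0$, then $\int_0^1 \osc_x H(t,\cdot)\, dt > 0$, so the essential oscillation of $H(t,\cdot)$ is bounded below by some $c > 0$ on a set $S_0 \subset [0,1]$ of positive measure. Covering $M$ by finitely many small Darboux balls and applying a pigeonhole together with Lebesgue differentiation to the measurable maps $t \mapsto \text{ess sup}_{B_\alpha} H(t,\cdot)$ and $t \mapsto \text{ess inf}_{B_\beta} H(t,\cdot)$, one can locate two \emph{disjoint} open balls $B_+$ and $B_-$ in $M$, a subinterval $[a,b] \subset [0,1]$, constants $c_+ > c_-$ with $c_+ - c_- > 0$, and a positive-measure subset $S \subset [a,b]$ with $|S|/(b-a)$ close to $1$, on which $H(t,x) \geq c_+$ for $x \in B_+$ and $H(t,y) \leq c_-$ for $y \in B_-$ whenever $t \in S$. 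For $i$ large the same inequalities hold for $H_i$ (with constants shifted by an arbitrarily small $\eta$) on times in $S$.

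The heart of the matter is then to show that smooth Hamiltonian flows whose generators have an oscillation persistently localized in two fixed disjoint balls \emph{cannot} converge uniformly to the identity. My approach is to introduce a smooth autonomous test Hamiltonian $F$ which is constant on $B_+$, vanishes near $B_-$, and is compactly supported in a slightly enlarged region. Using the tautological identity
\begin{equation*}
F(\Phi^t_{H_i}(x)) - F(x) = \int_0^t \{F,H_i\}(\Phi^s_{H_i}(x))\, ds,
\end{equation*}
one multiplies by a smooth test density $\phi$, integrates over $M$, changes variables $y = \Phi^s_{H_i}(x)$ (using that $\Phi^s_{H_i}$ preserves the volume form $\omega^n$), and uses the fact that the vector field $X_F$ is divergence-free to integrate by parts and shift the derivative off of $H_i$. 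The left-hand side tends to $0$ uniformly in $t$ since $\Phi^t_{H_i} \to \Id^t$ in $C^0$; the main term on the right converges, thanks to the $L^1$-convergence $H_i \to H$, to an expression involving only $H$ and smooth brackets such as $\{F,\phi\}$. Letting $F$ and $\phi$ range through smooth functions, the resulting identity forces $H(t,\cdot)$ to be Poisson-orthogonal to all Poisson brackets, which together with transitivity of $\Ham(M,\omega)$ on the connected manifold $M$ and the normalization $\int_M H(t,\cdot)\,\omega^n = 0$ yields $H = 0$, a contradiction.

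The main obstacle I expect is controlling the error terms produced by the change of variables in the step above: writing the test density pulled back under $\Phi^{-s}_{H_i}$ as its limit $\phi$ plus a remainder produces extra terms in which one would naively wish to integrate by parts again, but then the derivative of $\Phi^{-s}_{H_i}$ intervenes and is not controlled by the hypotheses. Overcoming this either requires an astute choice of cutoffs $F$ and $\phi$ that cancels the bad contributions, or an iteration/averaging argument that uses the localization of $F$ in tandem with the localized lower bound on $H_i$ produced in the first step. The lemmata advertised in Section~\ref{S:Main-result} presumably package precisely this ``transfer of oscillation into displacement'' step, which is where the real work lies.
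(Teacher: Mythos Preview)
Your proposal has a genuine gap, and it is exactly the one you flag yourself: after the change of variables $y=\Phi^s_{H_i}(x)$, integrating by parts against $X_F$ produces $\{F,\phi\circ(\Phi^s_{H_i})^{-1}\}$, which involves the derivative of the flow. Writing $\phi\circ(\Phi^s_{H_i})^{-1}=\phi+(\text{remainder})$ and undoing the integration by parts on the remainder throws the derivative back onto $H_i$, and neither $D\Phi^s_{H_i}$ nor $DH_i$ is controlled by the hypotheses. Taking $\phi$ constant kills the error but also kills the information, since then both sides vanish identically by volume preservation. This obstruction is not cosmetic: it is precisely why the $L^{(1,\infty)}$ case remained open after Viterbo settled the $C^0$ case, and no ``astute choice of cutoffs'' is known to circumvent it.

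Your hope that the lemmata in Section~\ref{S:Main-result} package this step is misplaced; the paper's argument is structurally different and never touches the evolution identity. Instead it proceeds as follows. First, the set $\mathcal{H}_0$ of null Hamiltonians is closed under sum, reparametrization $H(t,x)\mapsto\alpha'(t)H(\alpha(t),x)$, and $L^{(1,\infty)}$-limits. Applying reparametrizations $\alpha(s)=t+s/N$ and a Banach-valued Lebesgue differentiation theorem, one shows that for almost every $t$ the \emph{autonomous} function $h=H(t,\cdot)$ again lies in $\mathcal{H}_0$. This reduces everything to showing $\mathcal{H}_0^{st}=\{0\}$. For that, one uses that $\mathcal{H}_0^{st}$ is a $C^0$-closed $\mathbb{R}$-vector space invariant under pullback by $\Symp(M,\omega)$; hence if $H\in\mathcal{H}_0^{st}$ is nonzero, so is $H\circ\Phi-H$ for a suitable $\Phi\in\Ham_U(M,\omega)$, and this new element is supported in a single Darboux chart $U$. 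In $U\subset(\mathbb{R}^{2n},\omega_{std})$, small translations are time-$1$ maps of compactly supported Hamiltonian flows, so all small translates of $K$ lie in $\mathcal{H}_0^{st}$, and therefore so do all convolutions $K\ast\chi$ with $\chi$ a smooth approximate identity. This produces a nonzero \emph{smooth} element of $\mathcal{H}_0^{st}$, contradicting the smooth case (energy--capacity inequality).

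The key idea you are missing is this smoothing-by-convolution trick, which replaces analytic control of $D\Phi^s_{H_i}$ by the algebraic/group-theoretic observation that $\mathcal{H}_0^{st}$ is translation-invariant in a Darboux chart.
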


We will use the following definition in our proof.

\begin{defn} \label{D:Id-Ham} (Null Hamiltonians) Define
 $$ \mathcal{H}_{0} = \{ H \in  L_0^{(1,\infty)}([0,1] \times M) \, | \, (\Id^t,H) \in \PHameo(M,\omega) \} ,$$ this is the set 
of { \bf null Hamiltonians}. Define
$$ \mathcal{H}^{st}_{0} = \{ H \in \mathcal{H}_{0} \, | \, H \text{ is time independent} \} .$$
An element $H \in L_0^{(1,\infty)}([0,1] \times M)$ is time independent if there exists a representing
function for $H$, as in Lemma~\ref{L:real-analysis} below, that is time independent.
Since $ \mathcal{H}^{st}_{0} $ consists of {\bf time-independent} null Hamiltonians, we identify it with a subset of $ C_0(M) $.

\end{defn}

We divide the proof of Theorem~\ref{T:main} into a sequence of lemmata. Lemma~\ref{L:smooth-case} is the smooth case of Theorem~\ref{T:main}. It has been proven in the past, see e.g.~\cite{OM} or~\cite{HZ}.

\begin{lem} \label{L:smooth-case}
If $ H \in \mathcal{H}_{0} \cap C^{\infty}([0,1] \times M) $, then for all
$ t \in [0,1] $ we have $ H(t,x) \equiv 0 $.
\end{lem}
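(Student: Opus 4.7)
The plan is to show that the smooth flow $\Phi^t_H$ generated by $H$ coincides with $\Id^t$; the conclusion $H\equiv 0$ will then follow immediately, since a smooth normalized Hamiltonian whose flow is the identity has identically vanishing Hamiltonian vector field, is therefore constant on the connected manifold $M$, and must vanish by the normalization condition $\int_M H_t\,\omega^n=0$.

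To compare $\Phi^t_H$ with the $C^0$-limit $\Id^t$ of the approximating sequence $\Phi^t_{H_i}$, I would first upgrade the convergence $H_i\to H$ in $L^{(1,\infty)}$ into a statement about the flows themselves. Using the composition/inverse formulas recalled above, the smooth Hamiltonian generating the flow $s\mapsto (\Phi^s_H)^{-1}\circ \Phi^s_{H_i}$ is
\[
(\overline{H}\,\#\,H_i)(t,x) \;=\; (H_i-H)\bigl(t,\Phi^t_H(x)\bigr).
\]
Because precomposition with the symplectomorphism $\Phi^t_H$ preserves oscillation in $x$, the $L^{(1,\infty)}$ norm of this generator is exactly $\|H_i-H\|_{(1,\infty)}$, which tends to $0$. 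Consequently, for every $t\in[0,1]$, the Hofer norm of the Hamiltonian diffeomorphism $(\Phi^t_H)^{-1}\circ \Phi^t_{H_i}$ tends to $0$.

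I would then argue by contradiction. Suppose $\Phi^{t_0}_H\neq \Id$ for some $t_0\in[0,1]$, and choose a small nonempty open ball $U\subset M$ with $\Phi^{t_0}_H(\overline U)\cap \overline U=\emptyset$. Setting $\psi_i:=(\Phi^{t_0}_H)^{-1}\circ \Phi^{t_0}_{H_i}$, the previous step yields that the Hofer norm of $\psi_i$ is eventually strictly less than the displacement energy $e(U)>0$; hence $\psi_i$ cannot displace $U$, i.e.\ $\psi_i(U)\cap U\neq \emptyset$ for all large $i$. On the other hand, $\Phi^{t_0}_{H_i}\to \Id$ uniformly gives $\psi_i\to (\Phi^{t_0}_H)^{-1}$ uniformly, and since $(\Phi^{t_0}_H)^{-1}(\overline U)\cap \overline U=\emptyset$, we must have $\psi_i(\overline U)\cap \overline U=\emptyset$ for all large $i$. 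This is the desired contradiction.

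The only nontrivial ingredient is the positivity $e(U)>0$ of the displacement energy of open sets on a general closed symplectic manifold (the classical energy-capacity inequality of Hofer, extended by Lalonde--McDuff); without it the argument collapses. Everything else is routine manipulation of the composition/inverse formulas for Hamiltonian flows and continuity of a smooth diffeomorphism on a compact set.
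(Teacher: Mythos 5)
Your proof is correct and follows essentially the same route as the paper: both arguments compare $\Phi^t_{H_i}$ with $\Phi^t_H$ via the composition/inverse formulas, note that the generator of the difference flow has $L^{(1,\infty)}$ norm $\|H_i-H\|_{(1,\infty)}\to 0$, and derive a contradiction from the positivity of the displacement energy of a small ball displaced by (the limit of) these maps. The only differences are cosmetic: you compose in the opposite order and spell out the final step that a smooth normalized Hamiltonian generating the identity flow vanishes.
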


\begin{lem} \label{L:basic-properties}
The sets $ \mathcal{H}_{0}, \mathcal{H}^{st}_{0} $ have the following properties:

\begin{enumerate}
  \item $\mathcal{H}_{0}$ is closed under the sum operation and the minus operation. In other words, if $ H,K \in \mathcal{H}_{0} $, then $ -H, H+K \in \mathcal{H}_{0} $.
  \item $\mathcal{H}_{0}$ is closed in the $ L^{(1,\infty)} $ topology. $\mathcal{H}_{0}^{st}$ is closed in the $ L^\infty $ topology.
  \item If $ H \in \mathcal{H}_{0} $, then for any smooth increasing function $ \alpha : [0,1] \rightarrow [0,1] $
  the Hamiltonian $ K(t,x) = \alpha'(t) H(\alpha(t),x) $ belongs to $ \mathcal{H}_{0} $ as well.
  \item $\mathcal{H}^{st}_{0}$ is a vector space over $\mathbb{R}$.
  \item If $ H \in \mathcal{H}^{st}_{0} $, then for any $ \Phi \in \Symp(M,\omega) $ we have $ \Phi^{*} H = H \circ
  \Phi \in \mathcal{H}^{st}_{0} $.
\end{enumerate}

\end{lem}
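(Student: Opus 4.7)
The strategy is to reduce each claim to an assertion about smooth approximating sequences, then apply the standard smooth formulas for composition, inversion, and reparametrization together with the Oh--M\"uller theorem that $\PHameo$ is closed under composition and inversion.

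For (1), the crucial observation is that the composition and inverse formulas simplify radically when $\Phi^t = \Id^t$: we obtain $\overline{H}(t,x) = -H(t,x)$ and $H \# K(t,x) = H(t,x) + K(t,x)$, so both $-H$ and $H+K$ inherit membership in $\mathcal{H}_0$. For the $L^{(1,\infty)}$-closedness in (2), given $H_n \to H$ in $L^{(1,\infty)}$ with $H_n \in \mathcal{H}_0$, I choose for each $n$ a smooth sequence $H_{n,i}$ approximating $H_n$ in $L^{(1,\infty)}$ whose flows converge uniformly to $\Id^t$, then extract a diagonal subsequence $H_{n,i(n)}$ that approximates $H$ in $L^{(1,\infty)}$ with flows still converging to $\Id^t$. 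The $C^0$-closedness of $\mathcal{H}_0^{st}$ then reduces to the $L^{(1,\infty)}$-closedness, because $C^0$ convergence on the compact manifold $[0,1] \times M$ implies $L^{(1,\infty)}$ convergence and preserves time-independence.

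For (3), if $H_i$ smoothly approximates $H$, then $K_i(t,x) = \alpha'(t) H_i(\alpha(t), x)$ generates the reparametrized flow $t \mapsto \Phi^{\alpha(t)}_{H_i}$, which still converges uniformly to $\Id^t$ because $\Id^{\alpha(t)} = \Id^t$; the substitution $s = \alpha(t)$ shows $\|K_i - K\|_{(1,\infty)} \leq \|H_i - H\|_{(1,\infty)} \to 0$, where $K(t,x) = \alpha'(t) H(\alpha(t), x)$. Property (4) follows immediately: closure under addition is (1); scalar multiplication by $c \in (0,1]$ uses (3) with $\alpha(t) = ct$, which for time-independent $H$ yields $K(t,x) = c H(x)$; for $c > 1$ I write $c = k + r$ with $k \in \mathbb{Z}_{\geq 0}$, $r \in [0,1)$ and add $k+1$ null Hamiltonians; negation is (1). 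For (5), if $H_i$ approximates $H$ then $\Phi^{-1} \circ \Phi^t_{H_i} \circ \Phi$ is the smooth flow generated by $\Phi^* H_i$; it converges uniformly to $\Phi^{-1} \circ \Id^t \circ \Phi = \Id^t$ (using Lipschitz continuity of $\Phi, \Phi^{-1}$ on the compact manifold $M$), while $\|\Phi^* H_i - \Phi^* H\|_{(1,\infty)} = \|H_i - H\|_{(1,\infty)}$ since pullback by a diffeomorphism preserves the fiberwise maxima and minima.

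None of the five items presents a genuine obstacle; the lemma is essentially bookkeeping once one systematically tracks the smooth approximating sequences. The most delicate step is (3), where one must verify that reparametrizing in time does not destroy $L^{(1,\infty)}$ convergence, and this is precisely the input that drives the scalar-multiplication argument in (4).
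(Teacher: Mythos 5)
Your proposal is correct and follows essentially the same route as the paper: reduce each item to the smooth approximating sequences and invoke the composition/inverse/reparametrization/pullback formulas, with (1) and (5) resting on the Oh--M\"uller closure properties of $\PHameo(M,\omega)$, (2) being a diagonal extraction, (3) the time-reparametrization computation, and (4) deduced from (1) and (3) exactly as in the paper. The only differences are cosmetic: you spell out the diagonal argument and the pullback estimate where the paper simply cites the definition or \cite{OM}.
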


\begin{lem} (Lebesgue's differentiation theorem) \label{L:real-analysis}
Any $ H \in L_0^{(1, \infty)}([0,1] \times M) $
%Let  $ H_i \in C^{\infty}_0([0,1] \times M) $ be a sequence converging in the $L^{(1, \infty)}$ topology to a function $ H \in L_0^{(1, \infty)}([0,1] \times M) $. Then $ H $
can be represented by a function $ H: [0,1] \times M \rightarrow \mathbb{R} $ (we use the same notation for the function as
well), such that for any $ t \in [0,1] $ we have $ H(t,\cdot) \in C_0(M) $, and such that for any Cauchy sequence
$ ( H_i )_{i=1,2,...} $ in $ C^{\infty}_0([0,1] \times M) $ that represents $ H $, we have
$$ \lim_{i \rightarrow \infty} \| H_i - H \|_{(1,\infty)} = 0 ,$$
where $$ \| H_i - H \|_{(1,\infty)} = \int_0^1 \max_x [H_i(t,x) - H(x,t)] - \min_x [H_i(t,x) - H(x,t)] \, dt. $$
%$$ (L^\infty) \lim_{i \rightarrow \infty} H_i(t,\cdot) = H(t,\cdot) ,$$ for almost all $ t \in [0,1] $.
Moreover, almost everywhere in $ t \in [0,1) $ we have $$ \lim_{ h \rightarrow 0^+} \frac{1}{h} \int_{t}^{t+h} \| H(s,\cdot) - H(t,\cdot) \|_{\infty} ds = 0. $$
\end{lem}

\begin{lem} \label{L:reduction}
Let $ H \in \mathcal{H}_{0} $, and denote by the same notation $ H $ its functional representative, as in
Lemma~\ref{L:real-analysis}. Then for almost any $ t \in [0,1] $, the time-independent Hamiltonian $ h(x) = H(t,x) $
lies inside $ \mathcal{H}^{st}_{0} $.
\end{lem}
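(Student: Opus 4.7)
The plan is to construct, for almost every $t_0 \in [0,1)$, a sequence of null Hamiltonians in $\mathcal{H}_0$ that converges in the $L^{(1,\infty)}$ topology to the time-independent Hamiltonian $h(x) := H(t_0,x)$, viewed as constant in $t$. Because $\mathcal{H}_0$ is closed in $L^{(1,\infty)}$ by item (2) of Lemma \ref{L:basic-properties}, the limit $h$ will automatically belong to $\mathcal{H}_0$, and being time-independent it will then lie in $\mathcal{H}^{st}_0$ by the very definition of the latter.

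The key construction combines two closure operations: the additive structure of $\mathcal{H}_0$ (item (1)), which lets us replace $H$ by $nH$ for any positive integer $n$, and the time reparametrization rule (item (3)), which lets us zoom in on a short time interval. Fix a point $t_0 \in [0,1)$ at which the differentiation conclusion of Lemma \ref{L:real-analysis} holds. For every $n$ large enough that $t_0 + 1/n \leq 1$, consider the affine reparametrization $\alpha_n(t) := t_0 + t/n$, which is a smooth strictly increasing map $[0,1] \to [0,1]$. Applying item (3) to $nH \in \mathcal{H}_0$ with this $\alpha_n$ produces
$$K_n(t,x) \;:=\; \alpha_n'(t) \cdot (nH)(\alpha_n(t),x) \;=\; \frac{1}{n} \cdot n \cdot H\!\left(t_0 + \frac{t}{n},\, x\right) \;=\; H\!\left(t_0 + \frac{t}{n},\, x\right),$$
and therefore $K_n \in \mathcal{H}_0$. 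The essential feature of this choice is that the factor $n$ introduced by additive scaling exactly cancels the factor $\alpha_n'(t) = 1/n$ coming from the reparametrization, so that $K_n$ retains full amplitude even while its time argument is squeezed into an interval of length $1/n$ around $t_0$.

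To see that $K_n \to h$ in $L^{(1,\infty)}$, bound the oscillation pointwise in $t$ by twice the sup norm and substitute $u = t/n$ to obtain
$$\|K_n - h\|_{(1,\infty)} \;\leq\; 2\int_0^1 \|H(t_0 + t/n,\cdot) - H(t_0,\cdot)\|_\infty\, dt \;=\; 2n \int_0^{1/n} \|H(t_0 + u,\cdot) - H(t_0,\cdot)\|_\infty\, du,$$
which tends to zero as $n \to \infty$ by Lemma \ref{L:real-analysis}. Since the differentiation hypothesis is satisfied for almost every $t_0 \in [0,1)$, this yields the lemma. I do not foresee any real obstacle; the only conceptual step is recognizing that neither multiplying $H$ by $n$ nor reparametrizing by $\alpha_n$ on its own produces a useful time-independent limit, whereas the two operations combined do.
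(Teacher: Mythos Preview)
Your proof is correct and essentially identical to the paper's: both zoom into a Lebesgue point $t_0$ via the affine reparametrization $\alpha_n(t)=t_0+t/n$ and compensate the factor $\alpha_n'=1/n$ with the additive scaling $H\mapsto nH$ to produce $K_n(t,x)=H(t_0+t/n,x)\in\mathcal{H}_0$, then pass to the $L^{(1,\infty)}$ limit using the differentiation conclusion of Lemma~\ref{L:real-analysis}. The only cosmetic difference is that the paper first reparametrizes and then multiplies by $N$, whereas you multiply by $n$ first and then reparametrize.
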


\begin{lem} \label{L:H^st}
If $ H \in \mathcal{H}^{st}_{0} $, then $ H \equiv 0 $.
\end{lem}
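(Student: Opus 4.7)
The plan is to show that $\mathcal{H}^{st}_0\cap C^\infty(M)$ is $C^0$-dense in $\mathcal{H}^{st}_0$. Combined with Lemma \ref{L:smooth-case} (which says this intersection is $\{0\}$) and the $C^0$-closedness in property (2) of Lemma \ref{L:basic-properties}, this forces $\mathcal{H}^{st}_0=\{0\}$. Concretely, given $H\in\mathcal{H}^{st}_0$, I will construct a family of smooth elements $\tilde H_\epsilon\in\mathcal{H}^{st}_0$ with $\tilde H_\epsilon\to H$ in $C^0$ as $\epsilon\to 0$; each $\tilde H_\epsilon$ must vanish by Lemma \ref{L:smooth-case}, and hence so must $H$.

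The building block is a local symplectic mollifier. In a Darboux chart $U\subset M$ identified with a ball in $\mathbb{R}^{2n}$, fix nested sub-balls $V\Subset W\Subset U$ and a bump $\chi\in C^\infty_c(U)$ with $\chi\equiv 1$ on $W$. Take coordinate-type Hamiltonians $F_1,\ldots,F_{2n}$ (e.g.\ $F_j=\chi q_j$ and $F_{n+j}=-\chi p_j$ for $j=1,\ldots,n$), and, for small $v\in\mathbb{R}^{2n}$, set
\[ \psi_v:=\phi^{v_1}_{F_1}\circ\cdots\circ\phi^{v_{2n}}_{F_{2n}}\in\Symp(M,\omega). \]
On $V$ with $|v|$ small, the trajectories stay in $W$ (where $\chi\equiv 1$), so $\psi_v(x)=x+Jv$ is an honest linear translation; outside $U$, $\psi_v=\Id$. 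For a smooth density $\rho\in C^\infty_c(\mathbb{R}^{2n})$ with $\int\rho=1$ and support in $B(0,\epsilon)$, define
\[ T_{U,\rho}(h)(x):=\int_{\mathbb{R}^{2n}} \rho(v)\,\psi_v^*h(x)\,dv. \]
By properties (4), (5) of Lemma \ref{L:basic-properties}, each $\psi_v^*h$ and finite linear combinations thereof lie in $\mathcal{H}^{st}_0$; by property (2), the $C^0$-limit of Riemann sums defining $T_{U,\rho}(h)$ is also in $\mathcal{H}^{st}_0$. Most importantly, for $x\in V$ the formula reduces, under the substitution $u=Jv$, to an ordinary convolution $T_{U,\rho}(h)(x)=(\tilde\rho*h)(x)$ with $\tilde\rho\in C^\infty_c$; this is $C^\infty$ in $x$ no matter how rough $h$ is.

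To globalize, cover $M$ by finitely many such Darboux triples $(U_i,W_i,V_i,\chi_i)$, $i=1,\ldots,N$, arranged so that $\{V_i\}$ still covers $M$. Form the composite smoothing $T:=T_{U_N,\rho_N}\circ\cdots\circ T_{U_1,\rho_1}$, which also preserves $\mathcal{H}^{st}_0$. Setting $h_i:=T_{U_i,\rho_i}\circ\cdots\circ T_{U_1,\rho_1}(H)$, I claim by induction that $h_i$ is $C^\infty$ on $V_1\cup\cdots\cup V_i$. Indeed, $T_{U_i,\rho_i}(h_{i-1})$ is smooth on $V_i$ by the convolution structure above, regardless of the regularity of $h_{i-1}$; and at any point $x$ lying in the (open) smooth locus of $h_{i-1}$, provided $\supp\rho_i$ is small enough that $\{\psi^{(i)}_v(x')\}_{v\in\supp\rho_i,\,x'\text{ near }x}$ sits inside that smooth locus, differentiation under the integral gives smoothness of $T_{U_i,\rho_i}(h_{i-1})$ near $x$ as well. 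So $h_N\in\mathcal{H}^{st}_0\cap C^\infty(M)$, hence $h_N\equiv 0$ by Lemma \ref{L:smooth-case}.

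Finally, as $\supp\rho_i\to\{0\}$ (i.e.\ $\epsilon\to 0$), $\psi^{(i)}_v\to\Id$ uniformly and $T_{U_i,\rho_i}(h)\to h$ in $C^0$ for every $h\in C(M)$; the composition $T$ therefore approaches the identity operator on $(C(M),\|\cdot\|_\infty)$, and $T(H)\to H$ in $C^0$. Since $T(H)\equiv 0$ for each choice of small densities, we conclude $H\equiv 0$. The main obstacle in this plan lies in the inductive smoothness step: the supports $\supp\rho_i$ and the choices of $V_i\Subset W_i$ must be coordinated so that orbits of $\psi^{(i)}_v$ emanating from points in the previously smoothed region stay inside that region, requiring a uniform choice across the finite Darboux cover.
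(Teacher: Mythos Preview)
Your approach is sound and rests on the same core mechanism as the paper --- pulling back by Hamiltonian diffeomorphisms that act as local translations in Darboux coordinates, then averaging to convolve --- but the organization is genuinely different. The paper argues by contradiction: assuming $H\not\equiv 0$, it first \emph{localizes} by forming $K=H\circ\Phi-H$ for some $\Phi\in\Ham_U(M,\omega)$ with $H(\Phi(x_0))\neq H(x_0)$, producing a nonzero element of $\mathcal{H}^{st}_0$ supported in a \emph{single} Darboux chart $U$. All of the mollification then happens inside that one chart, yielding a nonzero smooth element of $\mathcal{H}^{st}_0$ and contradicting Lemma~\ref{L:smooth-case}. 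You instead smooth $H$ globally via a composition of chart-by-chart operators $T_{U_i,\rho_i}$ and then let the mollifiers shrink. The payoff of your route is a constructive $C^\infty$ approximation; the cost is exactly the inductive step you flag at the end, where you must ensure that $T_{U_i,\rho_i}$ does not destroy smoothness already achieved on $V_1\cup\cdots\cup V_{i-1}$. That obstacle is real but resolvable by a standard shrinking argument: track smoothness on $\delta_i$-shrunk balls $V_j^{-\delta_i}$, choose $\supp\rho_i$ so small that $\psi^{(i)}_v$ displaces points by less than $\delta_i-\delta_{i-1}$, and arrange $\delta_N$ to be less than the margin built into the cover. The paper's localization trick simply sidesteps this bookkeeping altogether, which is what makes its version shorter.
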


\section{Proofs} \label{S:Proof-of main-result}

\begin{proof} [Proof of Lemma~\ref{L:smooth-case}]
     Assume for a contradiction, that $H$ is not constantly zero. Let $\Phi^t_H$ denote the flow of $H$. Since $H$ is not constantly zero we conclude that $\Phi^T_H$ is not the identity map, for some $ T \in [0,1] $.

     Since $(\Id^t,H) \in \PHameo(M,\omega)$, there exists a smooth sequence $(\Phi^t_{H_i}, H_i) \in \PHam(M,\omega)$ which converges to $(\Id^t, H)$. This implies that $(\Phi^T_{H_i})^{-1}\circ \Phi^T_H $ $ C^0 $ converges to $\Phi^T_H$.  Pick a point $x\in M$ such that $\Phi^T_{H}(x)\neq x$.  There exists a small open neighborhood, $U$ of $x$, which is displaced by $(\Phi^T_{H_i})^{-1}\circ \Phi^T_H$ , for $i$ large enough.  The general energy-capacity inequality~\cite{LM}, implies that the Hofer norm of $(\Phi^T_{H_i})^{-1}\circ \Phi^T_H$ is bounded below by a positive constant, $e(U)$.
     But this norm is bounded from above by $$ \| \overline{H_i} \# H \|_{(1,\infty)} = \Vert -H_i(t,\Phi^t_{H_i}(x)) + H(t,\Phi^t_{H_i}(x))\Vert_{(1,\infty)} =   \| -H_i + H \|_{(1,\infty)} ,$$ what contradicts the $L^{(1,\infty)}$ convergence of $H_i$ to $H$.
\end{proof}

\begin{proof} [Proof of Lemma~\ref{L:basic-properties}] $ $

(1): If $(\lambda^t,H),(\mu^t,K) \in \PHameo(M,\omega)$, then the composition of the pairs, $(\lambda^t\circ\mu^t, H\#K)$, and the inverse flow $ ((\lambda^t)^{-1}, \overline{H}) $ are also in $\PHameo(M,\omega)$~\cite{OM}.  Since $\lambda^t = \mu^t = \Id^t$, we have $H\#K=H+K, \overline{H} = -H $.

(2): This is clear from the definition of $\mathcal{H}_{0} $ and of $ \mathcal{H}^{st}_{0} $.

(3): If $\Phi^t_G$ is a smooth Hamiltonian flow generated by $ G $, then its reparameterized flow $\Phi^{\alpha(t)}_G$ is generated by  $ L(t,x) = \alpha'(t) G(\alpha(t),x) $. If we assume that $ H \in \mathcal{H}_0 $, then there exists a sequence $ H_i(t,x) $ of smooth Hamiltonians, such that we have $ (C^0) \lim
\Phi^{t}_{H_i} = \Id^{t} $, and $ (L^{(1,\infty)}) \lim H_i = H $. Then the reparameterized flows
$\Phi^{\alpha(t)}_{H_i}$ are generated by $ K_i(t,x) = \alpha'(t) H_{i}(\alpha(t),x) $. It is clear, that $$
(C^0) \lim \Phi^t_{K_i} = (C^0) \lim \Phi^{\alpha(t)}_{H_i} = \Id^{t} ,$$ and also $$ (L^{(1,\infty)}) \lim K_i(t,x) = (L^{(1,\infty)}) \lim \alpha'(t) H_{i}(\alpha(t),x) = \alpha'(t) H(\alpha(t),x) .$$ Therefore $ K(t,x) = \alpha'(t) H(\alpha(t),x) \in \mathcal{H}_{0} $.

(4): This follows from the previous results. Suppose $ H \in \mathcal{H}^{st}_{0} $ with the topological Hamiltonian flow $\Phi^t$. For any $ 0 < a < 1 $, apply (3) with $ \alpha(t) = at $ to obtain that $ aH \in \mathcal{H}_{0} $
and hence $ aH \in \mathcal{H}^{st}_{0} $. Then, the case of general $ a \in \mathbb{R} $ follows from (1).

(5): In the smooth case, if $H$ generates the Hamiltonian flow $\Phi^t$, then $\Psi^{*} H$ generates the Hamiltonian flow $\Psi^{-1}\Phi^t\Psi$.  This property extends to topological Hamiltonian flows~\cite{OM}, and hence the result follows.
	
\end{proof}

\begin{proof} [Proof of Lemma~\ref{L:real-analysis}]
%   We think of each $H_i$ as a continuous path $ t \mapsto H_i(t,\cdot)$ in the Banach space $L^{\infty}(M)$.  The images of these paths are contained in the subspace $C(M) \subset L^{\infty}(M)$. The sequence of paths $H_i$ converges to $H$ in the $L^1$ norm, or equivalently the sequence of functions
%   $ \| H_i(t,\cdot) - H(t,\cdot) \|_{\infty} : [0,1] \mapsto \mathbb{R}$, $L^1$ converges to zero.  It is well known that every $L^1$ converging sequence of functions has a subsequence which converges almost everywhere, see~\cite{F}.  So we may assume that, for almost every $t \in [0,1]$ the sequence $ \| H_i(t,\cdot) - H(t,\cdot) \|_{\infty}$ converges to $0$.  This implies that $H(t,\cdot) \in C(M)$ for almost every $t \in [0,1].$ By changing the function $ H(t,\cdot) $ to coincide with a continuous function for other values of $t$, we obtain $ H(t,\cdot) \in C(M) $ for all $ t \in [0,1] $. \\

Consider a Cauchy sequence $ K_i \in C^{\infty}_0([0,1] \times M) $, $ i=1,2,... $, representing
$ H $. By passing to a subsequence, if necessary, we may assume that $ \| K_{i+1} - K_{i} \|_{(1,\infty)} < \frac{1}{2^i} $
for $ i \geqslant 1 $. Denote $ f_1(t) \equiv 0 $, and $$ f_N(t) := \| K_2(t,\cdot) - K_1(t,\cdot) \|_{\infty} + ... +
\| K_N(t,\cdot) - K_{N-1}(t,\cdot) \|_{\infty} ,$$ for $ N \in \mathbb{N} $, $ N > 1 $, $ t \in [0,1] $. Then $ (f_N) $ is a
non-decreasing sequence of non-negative continuous functions on the interval $ [0,1] $, and we have a bound on the $ L^1 $ norm
$ \| f_N \|_{1} < \sum_{i=1}^{N-1} \frac{1}{2^i} < 1 $, for $ N > 1 $. Therefore it follows that a.e. in $ t \in [0,1] $ there exists a finite
limit $ f(t) := \lim_{N \rightarrow \infty} f_N(t) $, and we have $ f \in L^1[0,1] $ and
$ \lim_{N \rightarrow \infty} \| f - f_N \|_{1} = 0 $. Since a.e. in $ t \in [0,1] $, the sequence $ (f_N(t))_{N=1,2,...} $
converges, and we have $ f_N(t) - f_M(t) \geqslant \| K_N(t,\cdot) - K_M(t,\cdot) \|_{\infty} $ for any $ N > M $, it follows
that for almost any $ t \in [0,1] $, the sequence $ (K_N(t,\cdot))_{N=1,2,...} $ is a Cauchy sequence with respect to the
$ L^\infty $ norm. Therefore for almost any $ t \in [0,1] $, there exists $$ H(t,\cdot) := (L^{\infty}) \lim_{N \rightarrow \infty} K_N(t,\cdot)
\in C_0(M) .$$ For all other $ t \in [0,1] $, define $ H(t,\cdot) \equiv 0 $. Now, for any $ N > M $ and $ t \in [0,1] $
we have $ f_N(t) - f_M(t) \geqslant \| K_N(t,\cdot) - K_M(t,\cdot) \|_{\infty} $, and for almost any $ t \in [0,1] $ we have
$ H(t,\cdot) = (L^{\infty}) \lim_{N \rightarrow \infty} K_N(t,\cdot) $, hence by taking $ N \rightarrow \infty $, for almost
any $ t \in [0,1] $ we obtain $ f(t) - f_M(t) \geqslant \| H(t,\cdot) - K_M(t,\cdot) \|_{\infty} $, for $ M \in \mathbb{N} $.
Finally, since $ f(t) = (L^1) \lim_{M \rightarrow \infty} f_M(t) $, we obtain
$ \lim_{M \rightarrow \infty} \| H - K_M \|_{(1,\infty)} = 0 $.

For any other Cauchy sequence $ H_i \in C^{\infty}_0([0,1] \times M) $, $ i=1,2,... $, representing $ H $, we have
$$ \| H - H_i \|_{(1,\infty)} \leqslant \| H - K_i \|_{(1,\infty)} + \| K_i - H_i \|_{(1,\infty)} $$ for any $ i \in \mathbb{N}
$, and hence we also have $ \lim_{i \rightarrow \infty} \| H - H_i \|_{(1,\infty)} = 0 $.

The second part of the theorem is a reformulation of \emph{Lebesgue's differentiation theorem} for
$ L^1 \text{  maps from [0,1] to the Banach space } C_0(M)$. Consider any Cauchy sequence
$  H_i \in C^{\infty}_0([0,1] \times M) $, $ i=1,2,... $, that represents $ H $. The functions $ H_i $ are continuous and
hence they satisfy $$ \lim_{h \rightarrow 0^+} \frac{1}{h} \int_{t}^{t+h} \| H_i(s,\cdot) - H_i(t,\cdot) \|_{\infty} ds = 0 $$
for all $ t \in [0,1) $.

Denote $ F_i = H - H_i $. Then for $ t \in [0,1) $ we have
$$ \limsup_{h \rightarrow 0^+} \frac{1}{h} \int_{t}^{t+h} \|H(s,\cdot) - H(t,\cdot) \|_{\infty} ds $$
$$ \leqslant \left( \limsup_{h \rightarrow 0^+} \frac{1}{h} \int_{t}^{t+h} \| F_i(s,\cdot) - F_i(t,\cdot) \|_{\infty} ds \right)
 + \left( \limsup_{h \rightarrow 0^+} \frac{1}{h} \int_{t}^{t+h} \| H_i(s,\cdot) - H_i(t,\cdot) \|_{\infty} ds \right) $$
$$ = \limsup_{h \rightarrow 0^+} \frac{1}{h} \int_{t}^{t+h} \| F_i(s,\cdot) - F_i(t,\cdot) \|_{\infty} ds $$

$$ \leqslant \limsup_{h \rightarrow 0^+} \frac{1}{h} \int_{t}^{t+h} \| F_i(s,\cdot) \|_{\infty} + \| F_i(t,\cdot) \|_{\infty} ds $$ $$ = \| F_i(t,\cdot) \|_{\infty} + \limsup_{h \rightarrow 0^+} \frac{1}{h} \int_{t}^{t+h} \| F_i(s,\cdot) \|_{\infty} ds .$$ Denote $ f_i(t) := \| F_i(t,\cdot) \|_{\infty} $, we have $ f_i \in L^1([0,1]) $. By the standard Lebesgue differentiation theorem, for any $i$, we have $$ \lim_{h \rightarrow 0^+} \frac{1}{h} \int_{t}^{t+h} f_i(s) ds = f_i(t) ,$$ or $$ \lim_{h \rightarrow 0^+} \frac{1}{h} \int_{t}^{t+h} \| F_i(s,\cdot) \|_{\infty} ds = \| F_i(t,\cdot) \|_{\infty} $$ for almost every $ t \in [0,1) $. Therefore for any $ i $ we have
$$ \limsup_{h \rightarrow 0^+} \frac{1}{h} \int_{t}^{t+h} \| H(s,\cdot) - H(t,\cdot) \|_{\infty} ds
 \leqslant 2f_i(t) = 2 \| F_i(t,\cdot) \|_{\infty} $$ for almost every $ t \in [0,1) $.\\

 The sequence of functions, $f_i(t), \text{ }L^1$ converges to zero. Every $L^1$ converging sequence has a subsequence that converges almost everywhere.   Hence, by passing to a subsequence we may assume $f_i(t) $ converges to zero for almost every $ t \in [0,1) $.\\

\end{proof}

\begin{proof} [Proof of Lemma~\ref{L:reduction}]
 Because of Lemma~\ref{L:real-analysis}, $ H $ can be represented by a function $ H : [0,1] \times M \rightarrow \mathbb{R} $, such that for any $ t \in [0,1) $, the function $ H(t,\cdot) \in C_0(M) $ is continuous, and moreover for almost any $ t \in [0,1) $ we have $$ \lim_{ h \rightarrow 0^+} \frac{1}{h} \int_{t}^{t+h} \| H(s,\cdot) - H(t,\cdot) \|_{\infty} ds = 0. $$ Consider such a value of $ t \in [0,1) $. Take $ N \in \mathbb{N} $ large enough. Applying Lemma~\ref{L:basic-properties}, (3) for $ \alpha(s) = t + \frac{s}{N} $, we obtain a Hamiltonian $  G_{N}(s,x) = \frac{1}{N} H(t + \frac{s}{N},x) \in \mathcal{H}_{0} $. Applying Lemma~\ref{L:basic-properties}, (1), we get $ H_{N}(s,x) = N G_{N}(s,x) = H(t + \frac{s}{N},x) \in \mathcal{H}_{0} $. Denote $ h(x) = H(t,x) $ for $ x \in M $. We have $$ \int_{0}^{1} \| H_{N}(s,\cdot) - h(\cdot) \|_{\infty} ds =
N \int_{t}^{t + \frac{1}{N}} \| H(\tau,\cdot) - H(t,\cdot) \|_{\infty} d\tau \rightarrow_{N \rightarrow \infty} 0 ,$$ where we made the substitution $ \tau = t + \frac{s}{N} $. Therefore, because of Lemma~\ref{L:basic-properties}, (2), we have $ h \in \mathcal{H}_{0} $, and being time-independent, $ h \in \mathcal{H}^{st}_{0} $.
\end{proof}

\begin{proof} [Proof of Lemma~\ref{L:H^st}]
Let $ H \in \mathcal{H}^{st}_{0} $, and assume, for a contradiction, that $ H $ is a non-zero function. Let us show, that
then there exists a non-zero function $ h(x) \in \mathcal{H}^{st}_{0} \cap C^{\infty}(M) $.
First, there exists a point in $ M $ such that $ H $ is not constant in any neighborhood of it
(otherwise $ H $ is locally constant, and since $ M $ is connected, $ H $ is a constant function). Take such a point
$ x_0 $, and consider an open neighborhood $ x_0 \in U $, such that $ U \varsubsetneq M $ is moreover a Darboux chart.
Take $ y_0 \in U $, such that $ H(x_0) \neq H(y_0) $. There exists $ \Phi \in \Ham_{U}(M,\omega) $, such that
$ \Phi(x_0) = y_0 $. Define $ K = H \circ \Phi - H $. Then $ K \in H_{0}^{st}$, because of Lemma~\ref{L:basic-properties}
(4), (5). Moreover $ K $ is a non-zero function, and $\supp (K) \subset U $. Consider the $ L^\infty $ - closure $ \mathcal{L} $ of
the linear span of all functions of the form $ \Phi^{*} K $, where $ \Phi \in \Ham_U(M,\omega) $. In view of
Lemma~\ref{L:basic-properties} (2), (4), (5), we have $ \mathcal{L} \subset \mathcal{H}^{st}_{0} $. Let us show,
that $ \mathcal{L} $ contains a non-constant smooth function. Since $ U $ is a Darboux neighborhood, and the latter
statement has a local nature, we can further assume, that $ U \subset (\mathbb{R}^{2n},\omega_{std}) $, and moreover
we have $ K: U \rightarrow \mathbb{R} $ with $ K \neq 0 $, and moreover $ K=0 $ near $ \partial U $. Extend $ K $ as a
function $ K: \mathbb{R}^{2n} \rightarrow \mathbb{R} $ by $ 0 $ outside $ U $.
In this new situation, where we replaced the manifold $ M $ by $ \mathbb{R}^{2n} $, we keep the notation $ \mathcal{L} $
for the $ L^\infty $ - closure of the linear span of all functions of the form $ \Phi^{*} K $, where
$ \Phi \in \Ham_{U}(\mathbb{R}^{2n},\omega_{std}) $. For $ v \in \mathbb{R}^{2n} $, we denote $ K_{v}(x) = K(x-v) $.
Let us show, that when the norm $ \| v \| $ is small enough, we have $ K_{v} \in \mathcal{L} $.
Take a neighborhood $ W $ of $ \supp(K) $, such that $ \overline{W} \subset U $. Pick a function
$ \phi : \mathbb{R}^{2n} \rightarrow \mathbb{R} $, such that $ \supp(\phi) \subset U $ and moreover
$ \phi \equiv 1 $ on $ W $. For any $ v \in \mathbb{R}^{2n} $ define a Hamiltonian
$ G_{v} : \mathbb{R}^{2n} \rightarrow \mathbb{R} $ as $ G_{v}(x) = \omega_{std}(v , x) \phi(x) $ for $ x \in \mathbb{R}^{2n} $.
Then, for small $ \| v \| $, the time one map of its Hamiltonian flow coincides on $ \supp(K) $ with the
translation $ x \mapsto x + v $. Therefore we will have $ K_v = \big( \Phi^{-1}_{G_v} \big)^{*} K $, and
hence $ K_v \in \mathcal{L} $. Here we denote by $  \Phi^{t}_{G_v} $ the Hamiltonian flow of $ G_v $,
for $ t \in \mathbb{R} $.

Therefore we have shown, that $ K_v \in \mathcal{L} $ for small $ \| v \| $. As a conclusion, we have that for a
smooth function $ \chi $ with support lying in a sufficiently small neighborhood of $ 0 $, we have that the convolution
$ K \ast \chi $ lies in $ \mathcal{L} $ as well. We see this from the fact, that $ K \ast \chi $ is an $ L^\infty $ limit of a
sequence of finite sums $ \sum_{k=1}^{m} c_k K_{v_k} $, coming from the approximation of the Riemann integral by Riemann
sums (for the sake of completeness, we provide a detailed proof of a slightly more general fact in
Lemma~\ref{L:approximation-Riemman-sums} below). But of course, the function $ K \ast \chi $ is smooth, provided that the
function $ \chi $ is smooth. Moreover, $ K $ is a non-zero function on $ U $. Choose a sequence $ \chi_k $ of smooth
mollifiers approximating the $ \delta_0 $ - function, having sufficiently small supports. Then, we have
$ K \ast \chi_k \rightarrow K $ in the $ L^\infty $ topology, and hence the function $ K \ast \chi_k $ is a non-zero function too when $ k $ is large. This shows, that $ \mathcal{L} $ contains a non-zero smooth function. Therefore we conclude, that the space $ \mathcal{H}^{st}_{0} \cap  C^{\infty}(M) $ contains a non-zero smooth function, which contradicts Lemma~\ref{L:smooth-case}.

\end{proof}

\begin{lem} \label{L:approximation-Riemman-sums}
Let $ f,g : \mathbb{R}^d \rightarrow \mathbb{R} $ be two functions, where $ g \in L^1(\mathbb{R}^d) $ has compact support, and
$ f \in C_{c}(\mathbb{R}^{d}) $ is continuous with compact support.
%Let $ f,g : \mathbb{R}^d \rightarrow \mathbb{R} $ be two functions, where $ g \in L^1(\mathbb{R}^d) $ has a compact support, and $ f \in C_{c}(\mathbb{R}^{d}) $ is a continuous function with compact support.
Then, there exists a sequence of measures
$ \mu_k = \sum_{j=1}^{N_k} c_{kj} \delta_{v_{kj}} $, where $ c_{kj} \in \mathbb{R} $, and $ v_{kj} \in \mathbb{R}^{d} $,
such that $$ f \ast g = (L^\infty) \lim_{k \rightarrow \infty} f \ast \mu_k = (L^\infty) \lim_{k \rightarrow \infty}
\sum_{j=1}^{N_k} c_{kj} f(x-v_{kj}) .$$
Moreover, if for some open set $ V \subset \mathbb{R}^d $ we have $ \supp(g) \subset V $, then one can choose the measures
$ \mu_k $ to satisfy $ \supp(\mu_k) \subset V $.

Here, by the $ L^\infty $ norm on $ C_{c}(\mathbb{R}^d) $ we mean $$ \| h \|_{\infty} = \max_{x \in \mathbb{R}^d} | h(x) | .$$
\end{lem}

\begin{proof}
Given a measure $\mu$ we denote by $\| \mu \|$ its total variation.  Recall that if $\mu$ is absolutely continuous (with respect to the Liouville measure), then the total variation norm of $\mu$ coincides with the $L^1$ norm of its Radon-Nikodym derivative.
First of all, the function $ g $ can be approximated up to any precision in $ L^1 $ norm, by a function of the form
$ \mu \ast \psi $, where $ \mu $ is a measure of the form $ \mu = \sum_{j=1}^N c_j \delta_{v_j} $, $ \psi = \frac{1}{Vol(K_{\epsilon})} \chi_{K_\epsilon} $, $ K_\epsilon = [0,\epsilon]^n $ is a cube, and
$ \chi_{K_\epsilon} $ is the characteristic function of $ K_\epsilon $.  Moreover, we require  $ \epsilon $ to be
arbitrarily small, $ \| \mu \|  = \sum_{j=1}^N |c_j| \leqslant \| g \|_{L^1} + 1 $, and $ \supp(\mu) \subset V $.
%where $ \mu $ is a measure of the form $ \mu = \sum_{j=1}^N c_j \delta_{v_j} $, and
%$ \psi = \frac{1}{Vol(K_{\epsilon})} \chi_{K_\epsilon} $, where $ K_\epsilon = [0,\epsilon]^n $ is a cube, and
%$ \chi_{K_\epsilon} $ is the characteristic function of $ K_\epsilon $. Moreover, we can take $ \epsilon $ to be
%arbitrarily small and also demand that $ \| \mu \|  = \sum_{j=1}^N |c_j| \leqslant \| g \|_{L^1} + 1 $ and that
%$ \supp(\mu) \subset V $.
To see this, observe that $ g $ can be approximated in $ L^1(\mathbb{R}^d) $, up to any precision, by a continuous function with support lying in $ V $, and
any continuous function $ h : \mathbb{R}^d \rightarrow \mathbb{R} $ with $ \supp(h) \subset V $ can be approximated in
$ L^1(\mathbb{R}^d) $ by $ \mu \ast \psi $, where $ \psi =  \frac{1}{Vol(K_{\epsilon})} \chi_{K_\epsilon} $, and
$$ \mu = \sum_{\alpha \in \epsilon \mathbb{Z}^n } Vol(K_{\epsilon}) h(\alpha) \delta_{\alpha} .$$
Note that $ \| \mu \| \rightarrow \| h \|_{L^1} $, as $ \epsilon \rightarrow 0 $, and $ \supp(\mu) \subset V $ for small $ \epsilon $.
%and in this case we have $ \| \mu \| \rightarrow \| h \|_{L^1} $, when $ \epsilon \rightarrow 0 $, and moreover we have $ \supp(\mu) \subset V $ for small $ \epsilon $.

Therefore, there exists a sequence of measures $ \mu_k = \sum_{j=1}^{N_k} c_{kj} \delta_{v_{kj}} $, and functions
$ \psi_k = \frac{1}{Vol(K_{\epsilon_k})} \chi_{K_{\epsilon_k}} $, such that
$$ g = (L^1) \lim_{k \rightarrow \infty} \mu_k \ast \psi_k ,$$ $$ \lim_{k \rightarrow \infty} \epsilon_k = 0 ,$$
$ \| \mu_k \| $ is uniformly bounded in $k$, and $ \supp(\mu_k) \subset V $ for all $ k \in
\mathbb{N} $.
%and we have that $ \| \mu_k \| $ is uniformly (on $ k $)  bounded, and moreover $ \supp(\mu_k) \subset V $ for all $ k \in
%\mathbb{N} $. About the functions $ \psi_k $ we know, that $ \psi_k \geqslant 0 $,
%that $ \int \psi_k = 1 $ and the support $ supp(\psi_k) $ converges to 0, when $ k \rightarrow \infty $ (i.e. it lies
%in any chosen neighborhood of $0$, for large $k$).
 We have
$$ f \ast \mu_k  - f \ast g = (f \ast \mu_k - f \ast \mu_k \ast \psi_k) + (f \ast \mu_k \ast \psi_k - f \ast g)  $$
$$ = (f \ast \mu_k - f \ast \psi_k \ast \mu_k) + (f \ast \mu_k \ast \psi_k - f \ast g) $$ $$ =
(f  - f \ast \psi_k) \ast \mu_k + f \ast ( \mu_k \ast \psi_k - g) .$$ Therefore, we have
$$ \| f \ast \mu_k  - f \ast g \|_{\infty} \leqslant
\| f  - f \ast \psi_k \|_{\infty} \| \mu_k \| + \| f \|_{\infty} \| \mu_k \ast \psi_k - g \|_{L^1} .$$

Note that $ \psi_k \geqslant 0 $, $ \int \psi_k = 1 $, and $ supp(\psi_k) $ converges to 0, as $ k \rightarrow \infty $
(i.e., they are contained in any chosen neighborhood of $0$, for large $k$).  These properties imply that
$ \lim_{k \rightarrow \infty} \| f  - f \ast \psi_k \|_{\infty} = 0 $.  Moreover, $ \| \mu_k \| $ is a bounded sequence;
hence, $\| f  - f \ast \psi_k \|_{\infty} \| \mu_k \|$ converges to $0$, as $ k \rightarrow \infty $.
%Because of the properties of $ \psi_k $ ($ \psi_k \geqslant 0 $; $ \int \psi_k = 1 $; the support $ supp(\psi_k) $ converges
%to $0$) we have $ \lim_{k \rightarrow \infty} \| f  - f \ast \psi_k \|_{\infty} = 0 $. Moreover we have that
%$ \| \mu_k \| $ is a bounded sequence.
Also, we know that $$ \lim_{k \rightarrow \infty} \| \mu_k \ast \psi_k - g \|_{L^1}  = 0 .$$

We conclude that $ \lim_{k \rightarrow \infty} \| f \ast \mu_k  - f \ast g \|_{\infty} = 0 $.

\end{proof}

\begin{proof} [Proof of Theorem~\ref{T:main}]
Assume that $ H \in L_0^{(1,\infty)}([0,1] \times M) $, such that $ (\Id^t,H) \in \PHameo(M,\omega) $. Then $ H \in \mathcal{H}_{0} $, and Lemma~\ref{L:reduction} implies that for almost any $ t \in [0,1] $, the time-independent Hamiltonian $ h(x) = H(x,t) $
lies inside $ \mathcal{H}^{st}_{0} $. Then, for such values of $ t $, the function $ H( \cdot,t) $ is zero, by Lemma~\ref{L:H^st}. Therefore $ H = 0 $ in $ L_0^{(1,\infty)}([0,1] \times M) $.
\end{proof}

\section{Local uniqueness} \label{S:Local-uniqueness}
In this section, we present a generalization of Theorem 1.3 from~\cite{Oh-local}, to the $ L^{(1,\infty)} $ case. As an application we give an example of a continuous function which fails to be a generator of any topological Hamiltonian flow. As another application, we give an example of a continuous flow of homeomorphisms, which is a $C^0$ limit of smooth Hamiltonian flows, but is not a topological Hamiltonian flow.

\subsection{Local uniqueness for topological Hamiltonian functions} \label{SS:local-Hamiltonian-function}

 The uniqueness result from Theorem~\ref{T:main} admits a generalization, which is a local analog of it.
The following result holds.

\begin{thm} \label{T:local-result}
  Let $ (\Phi^t,H) \in \PHameo(M,\omega) $, and assume that the flow $ \Phi^t $ equals to the identity flow on some open
subset $ U \subset M $, i.e. for any $ x \in U $ and $ t \in [0,1] $ we have $ \Phi^t (x) = x $. Then for almost all
$ t \in [0,1] $, the restriction $ H(t,\cdot)|_{U} $ is a constant function.
\end{thm}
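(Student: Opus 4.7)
The plan is to mirror the strategy of the proof of Theorem~\ref{T:main} in the local setting. Introduce the local analogs
$$ \mathcal{H}_0^U = \{H \in L^{(1,\infty)}([0,1] \times M) : (\Phi^t, H) \in \PHameo \text{ for some continuous flow } \Phi^t \text{ with } \Phi^t|_U = \Id\} $$
and $\mathcal{H}_0^{U, st} = \mathcal{H}_0^U \cap C(M)$. The strategy is to prove local analogs of Lemmas~\ref{L:basic-properties}, \ref{L:reduction}, \ref{L:smooth-case}, and~\ref{L:H^st}, with ``$H = 0$'' replaced by ``$H|_U$ is constant on each connected component of $U$''.

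For the local analog of Lemma~\ref{L:basic-properties}: closure under $L^{(1,\infty)}$-limits, reparametrization, and scalar multiplication go through as in the global case; for conjugation, use any symplectomorphism of $M$ preserving $U$ set-wise, in particular elements of $\Ham_U(M,\omega)$. For the sum, the key observation is that while $\mathcal{H}_0^U$ is not closed under pointwise addition, the composition identity $(H \# K)(t, x) = H(t, x) + K(t, (\Phi_H^t)^{-1}(x))$ simplifies on $U$ to $H + K$ since $(\Phi_H^t)^{-1}$ fixes $U$ pointwise, and $\overline H|_U = -H|_U$. Moreover, when $H$ and $K$ are both time-independent and compactly supported in a common Darboux ball $V$ with $\overline V \subset U$, the identity $H \# K = H + K$ in fact holds globally and is time-independent, because all of the nontrivial flow dynamics take place outside $U$ where both Hamiltonians vanish. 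Hence the subspace $\mathcal{S}_V := \{H \in \mathcal{H}_0^{U,st} : \supp H \subset V\}$ is a genuine real vector subspace of $C(M)$, closed under $C^0$-limits and under pullback by $\Ham_V(M,\omega)$. The proof of Lemma~\ref{L:reduction} then transfers verbatim, giving: for $H \in \mathcal{H}_0^U$ and almost every $t$, the time-slice $h(x) := H(t, x)$ lies in $\mathcal{H}_0^{U, st}$.

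It remains to show that $h \in \mathcal{H}_0^{U, st}$ implies $h|_U$ is constant on each component of $U$. For the smooth case, apply Lemma~\ref{L:smooth-case} to $\overline h \# h = 0$ to identify the $\PHameo$-flow of $h$ with the smooth flow $\Phi_h^t$; then $\Phi_h^t|_U = \Id$ forces $X_h|_U = 0$ and hence $dh|_U = 0$. For the general case, suppose by contradiction $h|_U$ is non-constant on some component. Pick a Darboux chart $V$ with $\overline V \subset U$ around a point $x_0 \in V$ where $h$ is not locally constant, a point $y_0 \in V$ with $h(x_0) \neq h(y_0)$, and $\Phi \in \Ham_V(M, \omega) \subset \Ham_U(M, \omega)$ sending $x_0$ to $y_0$. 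Then $K := \Phi^* h - h$ is continuous, supported in $V$, and non-zero at $x_0$. The plan is to show that $K$ (or a representative agreeing with $K$ on $V$ and compactly supported there) belongs to $\mathcal{S}_V$, by combining the identity $(\Phi^* h) \# (-h)|_U = K|_U$ with the time-slicing from Lemma~\ref{L:reduction}, together with careful tracking of supports. Once $K \in \mathcal{S}_V$, the translation-and-convolution argument of Lemma~\ref{L:H^st} runs entirely inside the vector space $\mathcal{S}_V$: translations of $\supp K$ within $V$ are generated by Hamiltonians $G_v \in \Ham_V$, their pullbacks keep the translate inside $\mathcal{S}_V$, and linear combinations and $C^0$-limits of such translates stay in $\mathcal{S}_V$. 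For a smooth mollifier $\chi$ of sufficiently small support, $K \ast \chi$ is smooth, compactly supported in $V$, and non-constant, yielding an element of $\mathcal{H}_0^{U, st} \cap C^\infty(M)$ with non-constant restriction to $U$, contradicting the local smooth uniqueness.

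The main obstacle is exactly this bookkeeping: realizing $K$ as an element of $\mathcal{S}_V$ rather than merely as a function whose restriction to $U$ coincides with the restriction of some element of $\mathcal{H}_0^{U, st}$. The two summands $\Phi^* h$ and $-h$ need not vanish off $V$, so $(\Phi^* h) \# (-h)$ is time-dependent off $U$, and its time-slice, though in $\mathcal{H}_0^{U, st}$ with the correct restriction to $U$, is not a priori compactly supported in $V$. Producing a compactly supported representative requires an additional step — for instance by composing with a further conjugation that cancels the off-$U$ background, or by reorganizing the construction so that every Hamiltonian used is compactly supported inside $V$ from the start. Once this technical point is settled, the rest of the argument is a direct transcription of the proof of Lemma~\ref{L:H^st}, applied inside $\mathcal{S}_V$, and combined with the local smooth uniqueness derived above.
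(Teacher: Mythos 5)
Your proposal rests on rebuilding the global machinery inside $U$, and the ``main obstacle'' you flag at the end is not a bookkeeping point but precisely the content of the theorem; as set up, it cannot be repaired along the lines you sketch. Membership in $\mathcal{H}_0^{U,st}$ is a condition on the Hamiltonian as a function on all of $M$ (it must genuinely generate some topological flow that is the identity on $U$), whereas every operation you perform ($\#$, inversion, time-slicing) is only controlled after restriction to $U$; off $U$ the composition formula produces time-dependent, uncontrolled terms, so $K=\Phi^*h-h$ is never exhibited as a generator of anything --- you only know its restriction to $U$ agrees with restrictions of elements of $\mathcal{H}_0^{U,st}$. Worse, if you could show that some nonzero $K$ with $\supp K\subset V$, $\overline{V}\subset U$, lies in $\mathcal{H}_0^{U,st}$, the convolution argument would be unnecessary: its flow is the identity on $U$ by definition and on $M\setminus\overline{V}$ by Theorem~\ref{T:Local-uniqueness-of-flows}(1) (since $K$ vanishes there), hence the identity everywhere, and Theorem~\ref{T:main} forces $K=0$. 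So $\mathcal{S}_V=\{0\}$, and the entire difficulty has been displaced into the single unproven step ``realize $K$ as an element of $\mathcal{S}_V$''; the suggested fixes (a further conjugation ``cancelling the off-$U$ background'') are not available because the off-$U$ part of the relevant time-slices is unknown. This also shows your claim that $H\# K=H+K$ holds globally for elements of $\mathcal{S}_V$ is vacuous rather than useful.

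There are further unacknowledged failures in the claim that the earlier lemmas ``transfer verbatim.'' First, $\mathcal{H}_0^U$ is not obviously closed under $L^{(1,\infty)}$-limits: the flows attached to a convergent sequence of local null Hamiltonians are constrained only on $U$ and need not $C^0$-converge, whereas in the global case they are all the identity flow, which is exactly what makes Lemma~\ref{L:basic-properties}(2) trivial. Second, the scaling step $H_N=NG_N$ in the proof of Lemma~\ref{L:reduction} uses additivity of $\mathcal{H}_0$, which in the local class holds only after restriction to $U$: the $N$-fold $\#$-product agrees with $NG_N$ only on $U$, and its behaviour off $U$ ruins the $L^{(1,\infty)}$-convergence to the time-slice that the closure argument requires. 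The paper's proof avoids all of this: since Theorem~\ref{T:main} is already available, for any $\Psi\in\Ham_U(M,\omega)$ one has $\Psi^{-1}\circ\Phi^t\circ\Psi=\Phi^t$ (note that $\Phi^t$ preserves $M\setminus U$ by injectivity), the conjugated flow is generated by $\Psi^*H$, and uniqueness gives $H(t,\Psi(x))=H(t,x)$ for almost every $t$; choosing a countable dense subset $\{x_i\}\subset U$ and $\Psi_i\in\Ham_U(M,\omega)$ with $\Psi_i(x_0)=x_i$, and using the continuity in $x$ of the representative from Lemma~\ref{L:real-analysis}, one gets constancy of $H(t,\cdot)|_U$ outside a single null set of $t$. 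Compare your multi-stage sketch with this short conjugation argument: the theorem is a corollary of Theorem~\ref{T:main}, not a parallel theory.
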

\begin{proof}[Proof of Theorem~\ref{T:local-result}]
 Let $ \Psi \in \Ham_U(M,\omega) $. Then we have $ \Psi^{-1} \circ \Phi^t \circ \Psi = \Phi^t $ for any $ t \in [0,1] $. On
the other hand, the Hamiltonian function of the flow $ \Psi^{-1} \circ \Phi^t \circ \Psi $ equals to $ \Psi^* H $, while
the Hamiltonian function of the flow $ \Psi^t $ equals $ H $. We can apply the uniqueness result for the Hamiltonian
function, corresponding to a topological Hamiltonian flow, which follows from Theorem~\ref{T:main}. We conclude
that $ H(t,\Psi(x)) = H(t,x) $ in $ L_0^{(1,\infty)}([0,1] \times M) $, for any $ \Psi \in \Ham_U(M,\omega) $. Let us
derive the result of the theorem from this. Choose a dense countable subset of $ U $, $ X = \{ x_0,x_1,x_2... \} \subset U $.
For every $ i \in \mathbb{N} $ pick some $ \Psi_i \in \Ham_U(M,\omega) $ satisfying $ \Psi_i(x_0) = x_i $. Then for
each $ i \in \mathbb{N} $ there exists a zero-measurable set $ S_i \subset [0,1] $, such that $ H(t,\Psi_i(x)) = H(t,x) $ for
any $ t \notin S_i $ and $ x \in M $. In particular $ H(t,x_i) = H(t,x_0) $ for any $ t \notin S_i $.
Denote $ S = \bigcup_{i=1}^{\infty} S_i $. Then $ S \subset [0,1] $ is of measure $ 0 $, and moreover we
have $ H(t,x_i) = H(t,x_0) $ for any $ t \notin S $. Fix arbitrary $ t \notin S $. The function $ H(t,\cdot) $ is
continuous on $ M $, and we have $ H(t,x) = H(t,x_0) $ for any $ x \in X $, while $ X \subset U $ is a dense
subset. We conclude that $ H(t,\cdot)|_{U} = const $ for any $ t \notin S $.
\end{proof}

\subsection{Local uniqueness for topological Hamiltonian flows} \label{SS:local-Hamiltonian-flow}

 \begin{thm}  \label{T:Local-uniqueness-of-flows} $ $
\begin{enumerate}

\item Let $ H \in L_0^{(1,\infty)}([0,1] \times M ) $ be a topological Hamiltonian function, that generates a topological Hamiltonian
flow $ \Phi^{t}_{H} $. Let $ U \subset M $ be an open subset. Assume that for almost all $ t \in [0,1] $, the restriction  $ H(t,\cdot)|_{U}$ is a constant function, say $c(t)$. Then $\Phi^{t}_{H}(x) = x $ for any $ x \in U $, $t \in [0,1] $.

\item Let $ H,K \in L_0^{(1,\infty)}([0,1] \times M ) $ be two topological Hamiltonian functions, that generate topological Hamiltonian
flows $ \Phi^{t}_{H}, \Phi^{t}_{K} $ respectively. Let $ U \subset M $ be an open subset. Assume that for any  $ t \in [0,1] $ we
have $ H(t,x) = K(t,x) \text{ } \forall x \in \Phi^{t}_{H}(U) $. Then we have $ \Phi^{t}_{H}(x) = \Phi^{t}_{K}(x) $ for
any $ x \in U $, $ t \in [0,1] $.

\end{enumerate}
\end{thm}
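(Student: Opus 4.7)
The plan is to deduce part (2) from part (1), and to prove part (1) by contradiction using Hofer's energy-capacity inequality.

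For part (2), the composition formulae from \cite{OM} give that $(\Phi^t_H)^{-1} \circ \Phi^t_K$ is a continuous Hamiltonian flow with generating Hamiltonian $\overline{H} \# K(t,x) = -H(t, \Phi^t_H(x)) + K(t, \Phi^t_H(x))$. For $x \in U$ we have $\Phi^t_H(x) \in \Phi^t_H(U)$, so the hypothesis yields $\overline{H} \# K(t, x) = 0$ for every $x \in U$ and every $t \in [0, 1]$. In particular this Hamiltonian is identically zero on $U$ for a.e.\ $t$, and part (1) gives $(\Phi^t_H)^{-1} \circ \Phi^t_K = \Id$ on $U$, i.e., $\Phi^t_H = \Phi^t_K$ on $U$.

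For part (1), suppose for contradiction that $\Phi^{t^*}_H(x_0) \neq x_0$ for some $x_0 \in U$ and $t^* \in (0, 1]$. I first reduce to the case where the whole trajectory of $x_0$ stays inside $U$ on a full time interval. Let $S = \inf\{t \in [0, t^*] : \Phi^t_H(x_0) \neq x_0\}$; by continuity $\Phi^S_H(x_0) = x_0$, and since $U$ is open the trajectory remains in $U$ throughout some interval $[S, S+\delta]$. By definition of $S$ one can choose $t_0 \in (S, S+\delta]$ with $\Phi^{t_0}_H(x_0) \neq x_0$. Reparametrizing the flow $\Phi^t_H \circ (\Phi^S_H)^{-1}$ on $[S, t_0]$ linearly to $[0, 1]$ produces a continuous Hamiltonian flow whose generator is still constant on $U$ for a.e.\ time, whose time-$0$ map is the identity, whose trajectory of $x_0$ lies entirely in $U$, and whose time-$1$ map does not fix $x_0$. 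After this reduction I rename the rescaled flow and generator back to $(\Phi^t_H, H)$.

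Now pick an open $V$ with $\{\Phi^t_H(x_0)\}_{t \in [0, 1]} \subset V \subset \overline{V} \subset U$, and a small ball $B$ around $x_0$ satisfying $\Phi^t_H(B) \subset V$ for all $t \in [0, 1]$ and $\Phi^1_H(B) \cap B = \emptyset$. Take smooth approximations $H_i \to H$ in $L^{(1,\infty)}$ with $\Phi^t_{H_i} \to \Phi^t_H$ uniformly, so that for $i$ large the $H_i$-orbits of points in $B$ still remain in $V$ throughout $[0, 1]$ and $\Phi^1_{H_i}(B) \cap B = \emptyset$. Fix a smooth cutoff $\chi$ with $\chi \equiv 1$ on $V$ and $\supp(\chi) \subset U$; let $c_i(t)$ be the spatial average of $H_i(t, \cdot)$ over $\overline{V}$; and set $F_i(t, x) := \chi(x)(H_i(t, x) - c_i(t))$. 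On $V$ one has $F_i = H_i - c_i(t)$ and hence $X_{F_i} = X_{H_i}$, so the $F_i$-trajectories starting in $B$ coincide with the corresponding $H_i$-trajectories as long as they remain in $V$; by the previous step this holds throughout $[0, 1]$, and so $\Phi^1_{F_i}(B) = \Phi^1_{H_i}(B)$ is disjoint from $B$.

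Hofer's energy-capacity inequality then gives $\|F_i\|_{(1,\infty)} \geq e(B) > 0$ for $i$ large. On the other hand, $F_i$ is supported in $U$ and $|F_i| \leq |H_i - c_i|$ there. The hypothesis $H(t, \cdot)|_U = c(t)$ for a.e.\ $t$ yields $\|H_i - c_i\|_{L^\infty(U)} \leq 2\|H_i - H\|_\infty$ a.e.\ in $t$, and combined with the bound $\|H_i - H\|_\infty \leq \operatorname{osc}(H_i - H)$ valid for mean-zero differences of normalized Hamiltonians, one obtains $\|F_i\|_{(1,\infty)} \to 0$, the desired contradiction. The delicate point is the reduction keeping the relevant trajectory inside $U$ for all time; without it one could not confine the modified approximating Hamiltonian to $U$ while preserving the displacement of $B$.
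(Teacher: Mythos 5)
Your proof is correct and takes essentially the same route as the paper: part (2) is deduced from part (1) via the composition formula, and part (1) is proved by cutting off the approximating Hamiltonians near the trajectory of the displaced point and contradicting the energy--capacity inequality with the $L^{(1,\infty)}$-convergence of the cutoffs to zero. The only cosmetic differences are your reparametrization reduction (the paper instead simply picks a time $T$ with $\Phi^{T}_{H}(x)\neq x$ before the trajectory first leaves $V$) and your subtraction of the spatial average $c_i(t)$ in place of the paper's $c(t)$ plus a normalizing constant.
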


The proof of Theorem~\ref{T:Local-uniqueness-of-flows} (1) is similar to the proof of Theorem 3.1 from~\cite{Oh-local}.

\begin{proof} [Proof of Theorem~\ref{T:Local-uniqueness-of-flows}] $ $

(1): We know that there exists a sequence of smooth Hamiltonians $H_i, \text{ } L^{(1,\infty)} $ converging to $H$ whose
flows $\Phi^{t}_{H_i} \text{ } C^0 $ converge to $ \Phi^t_H $.  For a given point $x \in U$, pick a neighborhood of it
$V$ which is compactly contained in $U$, and take a smooth cut off function $\beta$ such that support of $\beta$ is contained
in $U$ and $\beta=1 \text{ on } V .$ For any $ i \in \mathbb{N} $, for any $ t \in [0,1] $, define
$$ c_i(t) = \frac{\int_U H_i(t,x) \omega^n}{\int_U \omega^n} ,$$
$$ d_i(t) = \frac{ \int_{M} \beta(x) (H_i(t,x) - c_i(t)) \omega^n }{ \int_{M} \omega^n } ,$$ and then define new smooth
normalized Hamiltonians $$ G_i(t,x)= \beta(x) (H_i(t,x) - c_i(t)) - d_i(t) .$$ Then
$G_i(t,x)= H_i(t,x) - c_i(t) - d_i(t)$ on $V$. It is easy to see that $(L^{(1,\infty)}) \lim G_i = 0  $. Assume for
a contradiction, that for some $ t \in [0,1] $ we have $ \Phi^{t}_{H}(x) \neq x $. Then we can find some $ 0 < T \leqslant 1 $ such that
$ \Phi^{T}_{H}(x) \neq x $ and moreover $ \Phi^{t}_{H}(x) \in V $ for all $ t \in [0,T] $. Therefore,
since $ (C^0)\lim \Phi^{t}_{H_i} = \Phi^{t}_{H} $, there exists a small enough open neighborhood $ W $
of $ x $, $ x \in W \subset V $, such that $ \Phi^{T}_{H_i}(W) \cap W = \emptyset $ and
moreover $ \Phi^{t}_{H_i}(W) \subset V $ for all $ t \in [0,T] $, for sufficiently large $i$.
Because $G_i(t,x) = H_i(t,x) - c_i(t) - d_i(t)$ on all of $V$, we have $ \Phi^{T}_{G_i}(W) \cap W = \emptyset $ as
well, for $ i $ large enough. Then the energy-capacity inequality implies that $\| G_i \|_{(1,\infty)}$ is
bounded from below by the displacement energy of $W$, which is known to be positive. But we know that $G_i$
$L^{(1,\infty)}$-converges to $0$, and this is a contradiction.

(2): Consider the flow $(\Phi^t_H)^{-1}\circ \Phi^t_K$.  This flow is generated by the Hamiltonian $ \overline{H} \# K(t,x) =  -H(t,\Phi^t_{H}(x)) + K(t,\Phi^t_{H}(x))$. By our assumption, this Hamiltonian is zero on $U$. Therefore, by (1) we obtain $ (\Phi^t_H)^{-1}\circ \Phi^t_K(x) = x $, and hence, $ \Phi^t_H(x) = \Phi^t_K(x) $
for any $ x \in U $, $ t \in [0,1] $.

\end{proof}

% Local example
\subsection{Example of a non-generator}
We will now construct an example of a continuous function which does not generate a topological Hamiltonian flow.
Let $ (M, \omega) $ be a closed symplectic manifold. Consider some Darboux chart $ W \subset M $, endowed with symplectic coordinates $ (x_1,y_1,...,x_n,y_n) $, and assume
for simplicity that $$ 0 = (0,0,...,0,0) \in W .$$ Take any continuous function $ K : M \rightarrow \mathbb{R} $, such that for  every point $$ (x_1,y_1,...,x_n,y_n) \in W,$$ sufficiently close to $ 0 \in W $, we have $ K(x_1,y_1,...,x_n,y_n) = |x_1| $. Let us show that such a function does not generate a topological Hamiltonian flow.
Assume for a contradiction, that $ K $ generates a topological Hamiltonian flow $ \Phi^{t}_{K} $ on  $ M $. There exists $ \epsilon > 0 $, such that we have
$ (x_1,y_1,...,x_n,y_n) \in W $ and $$ K(x_1,y_1,...,x_n,y_n) = |x_1| ,$$ provided $ |x_i|,|y_i| \leqslant \epsilon $, $ i = 1,2,...,n $.
Consider any smooth function $ \phi : M \rightarrow \mathbb{R} $ supported in $ W $, such that $ \phi(x) = 1 $ for $ x = (x_1,y_1,...,x_n,y_n) \in W $ with $ |x_i|,|y_i| \leqslant \epsilon $, $ i = 1,2,...,n $.

Define $ H_1 : M \rightarrow \mathbb{R} $ as $ H_1(x) = x_1 \phi(x) $, for $ x = (x_1,y_1,...,x_n,y_n) \in W $, and as $ H_1(x) = 0 $ for $ x \in M \setminus W $.
Define $ U_1 \subset W $ to be the set of all $ (x_1,y_1,...,x_n,y_n) \in W $, such that $ 0 < x_1 < \epsilon $, $ |y_1| < \frac{\epsilon}{2} $, and $ |x_i|,|y_i| < \epsilon $ for $ i = 2,3,...,n $. Apply Theorem~\ref{T:Local-uniqueness-of-flows} (2), to $ H_1, K, U_1 $ in the time interval $ [0,\frac{\epsilon}{2}] $ (of course, the time interval [0,1] in Theorem~\ref{T:Local-uniqueness-of-flows} can be replaced by any other time interval). We conclude that $$ \Phi^t_K(x_1,y_1,...,x_n,y_n) = \Phi^t_{H_{1}}(x_1,y_1,...,x_n,y_n) = (x_1,y_1-t,...,x_n,y_n) $$ for any $ 0 \leqslant t \leqslant \frac{\epsilon}{2} $, for any $ (x_1,y_1,...,x_n,y_n) \in W $, provided $ 0 < x_1 < \epsilon $, $ |y_1| < \frac{\epsilon}{2} $, $ |x_i|,|y_i| < \epsilon $ for $ i = 2,3,...,n $.

Now define $ H_2 : M \rightarrow \mathbb{R} $ as $ H_2(x) = - H_1(x) $, and let $ U_2 \subset W $ be the set of all $ (x_1,y_1,...,x_n,y_n) \in W $, such that $ -\epsilon < x_1 < 0 $, $ |y_1| < \frac{\epsilon}{2} $, and $ |x_i|,|y_i| < \epsilon $ for $ i = 2,3,...,n $. Applying Theorem~\ref{T:Local-uniqueness-of-flows} (2), to $ H_2, K, U_2 $ in the time interval $ [0,\frac{\epsilon}{2}] $, in a similar way we obtain $$ \Phi^t_K(x_1,y_1,...,x_n,y_n) = \Phi^t_{H_{2}}(x_1,y_1,...,x_n,y_n) = (x_1,y_1+t,...,x_n,y_n) $$ for any $ 0 \leqslant t \leqslant \frac{\epsilon}{2} $, for any $ (x_1,y_1,...,x_n,y_n) \in W $, provided $ -\epsilon < x_1 < 0 $, $ |y_1| < \frac{\epsilon}{2} $, $ |x_i|,|y_i| < \epsilon $ for $ i = 2,3,...,n $.

Clearly, such flow $ \Phi^t_K $ is not a flow of homeomorphisms, and we come to a contradiction.

% 2nd Local example
\subsection{Example of a non-flow}

In this section, for any closed symplectic manifold $ (M^{2n},\omega) $, we construct a continuous flow of homeomorphisms, i.e. a continuous path in the group $Homeo(M,\omega)$, which is a $C^0$ limit of smooth Hamiltonian flows, but is not a topological Hamiltonian flow. This flow fails to be a topological Hamiltonian flow, because there exist no $H \in L_0^{(1,\infty)}([0,1] \times M)$ generating the flow.

The following example is a generalization of the one considered by Oh and M\"uller (see~\cite{OM}, and~\cite{Mu} Section 2.4.1).
Let $ (M^{2n},\omega) $ be a closed $2n$-dimensional symplectic manifold. Consider a smooth symplectic embedding of
a small ball $ i : (B^{2n}(a),\omega_{std}) \hookrightarrow (M,\omega) $, and denote $ V = i(B^{2n}(a)) $. Consider
the Darboux coordinates $ (x_1,y_1,...,x_n,y_n) $ on $ V $ coming from $ B^{2n}(a) $.
For a smooth function $ h : (0,a) \rightarrow \mathbb{R} $, which is zero near $a$, define a Hamiltonian
$ H : M \setminus \{ i(0) \} \rightarrow \mathbb{R} $, such that for $ x = (x_1,y_1,...,x_n,y_n) \in V $ we have $ H(x) = h(r) $ where $ r = \sqrt{x_1^2 + y_1^2 + ... + x_n^2 + y_n^2} $, and for $ x \in M \setminus V $ we have $ H(x) = 0 $. Then $ H $ has a well defined smooth Hamiltonian flow $ \Phi^t : M \setminus \{ i(0) \} \rightarrow M \setminus \{ i(0) \} $, and we can extend $ \Phi^t $ to a continuous flow on $ M $, by setting $ \Phi^t(i(0)) = i(0) $.
Moreover,  in the case when $ h(r) = 0 $ for small $ r $, the flow $ \Phi^t $ is Hamiltonian, where the (un-normalized) Hamiltonian function equals $ H $ on $ M \setminus \{ i(0) \} $, and equals $0$ at $ i(0) $. We say that $ \Phi^t $ is the rotation associated to $ h $.

Now, consider a smooth function $ f : (0,a) \rightarrow \mathbb{R} $, such that $ f(r) = \frac{1}{r} $ for $ r \in (0,\frac{a}{3}) $, and also $ f(r) = 0 $ for $ r \in (\frac{2a}{3},a) $. Let $ \Psi^t : M \rightarrow M $ be the rotation, associated to $ f $. Then the flow $ \Psi^t $ is a $C^0$-limit of smooth Hamiltonian flows. Indeed, take a sequence of smooth functions $ f_n : (0,a) \rightarrow \mathbb{R} $, such that $ f_n(r) = 0 $ for $ r \in (0, \frac{1}{n}) $, $ f_n(r) = f(r) $ for $ r \in (\frac{2}{n},a) $, and for each $ n $ define $ \Psi_n^t $ to be the rotation associated to $ f_n $. Then $ \Psi_n^t $ is the needed sequence of smooth Hamiltonian flows.

Assume, for a contradiction, that $ \Psi^t $ is in fact a topological Hamiltonian flow.
Then denote by  $ H(t,x) $ its Hamiltonian function. Take $ f_n $, $ \Psi_n^t $ as above, and denote by $ H_n(x) $ the
normalized Hamiltonian function of $ \Psi_n^t $. We obtain that the flow $ ( \Psi_n^t )^{-1} \circ \Psi^t $ is generated by $ K_n(t,x) = -H_n(\Psi_n^t(x)) + H(t,\Psi_n^t(x)) $.
Moreover, we have $ ( \Psi_n^t )^{-1} \circ \Psi^t = Id^t $ on
$$ V_n := \left\{ x = (x_1,y_1,...,x_n,y_n) \in V \, \Big| \, \, r = \sqrt{x_1^2 + y_1^2 + ... + x_n^2 + y_n^2} > \frac{2}{n} \right\} .$$ Then, from Theorem~\ref{T:local-result} we have $ K_n(t,x) = -H_n(\Psi_n^t(x)) + H(t,\Psi_n^t(x)) = c_n(t) $ for almost all $ t $, for $ x \in V_n $. Since $ \Psi_n^t(V_n) = V_n $, we get $ H(t,x) = H_n(x) + c_n(t) $ for almost all $ t $, for $ x \in V_n $. This immediately implies that for almost any fixed $ t $, the function $ H(t,\cdot) $ is unbounded, and we come to a contradiction.

\subsection*{Acknowledgments}
The first author was supported by the Mathematical Sciences Research Institute.
The second author was partially supported by NSERC fellowship CGS D3-332749-2007.
We are thankful to the organizers of the 2009-10 program in Symplectic and
Contact Geometry and Topology, and to the staff of the Mathematical Sciences Research Institute at Berkeley,
for providing an enjoyable and lively research atmosphere. We would like to thank Octav Cornea, Yakov Eliashberg,
Boris Ettinger, Baoping Liu, Stefan M\"uller, Yong-Geun Oh, Fraydoun Rezakhanlou and Alan Weinstein for helpful
discussions and comments.  We thank an anonymous referee for several helpful comments and suggestions.

\bigskip
\noindent Lev Buhovski\\
School of Mathematical Sciences, Tel Aviv University \\
{\it email:} levbuh@post.tau.ac.il

\bigskip
\noindent Sobhan Seyfaddini \\ 
Department of Mathematics and their Applications \\ \'Ecole Normale Sup\'erieure \\
{\it email:} sobhan.seyfaddini@ens.fr


\begin{thebibliography}{9999999}

 \bibitem[E1] {E1} Y. M. Eliashberg, {\em Rigidity of symplectic and contact structures}, preprint, 1981.

 \bibitem[E2] {E2} Y. M. Eliashberg, {\em A theorem on the structure of wave fronts and its application in symplectic topology}, Funktsional. Anal. i Prilozhen., 21:65-72, 1987.

% \bibitem[F]  {F} G. B. Folland, {\em Real Analysis}, John Wiley \& Sons, 1984.

 \bibitem[G]  {G} M. Gromov, {\em Pseudoholomorphic curves in symplectic manifolds}, Invent. Math., 82:307-347, 1985.

 \bibitem[HZ]{HZ} H. Hofer, E. Zehnder,  {\em Symplectic Invariants and Hamiltonian Dynamics}, Birkh\"auser Advanced Texts, Basel-Boston-Berlin, 1994.

 %\bibitem[H]{H} H. Hofer, {\em On the topological properties of Symplectic maps}, Proc. Roy. Soc. Edinburgh, \textbf{115}, 1990, 25-38.

 \bibitem[LM]{LM} F. Lalonde, D. McDuff, {\em The geometry of symplectic energy}, Ann. of Math. (2) 141 (1995), no. 2, 349ñ-371.

 \bibitem[Mu]{Mu} S. M\"uller, {\em The group of Hamiltonian homeomorphisms and topological symplectic topology}, PhD. thesis; http://gradworks.umi.com/33/14/3314308.html

 \bibitem[Oh-1]{Oh} Y.-G. Oh, {\em The group of Hamiltonian homeomorphisms and continuous Hamiltonian flows}, in Proceedings
for the 2007 AMS Summer Research Conference ''Symplectic topology and measure preserving dynamical systems", AMS, ed by
A. Fathi, Y.-G. Oh and C. Viterbo. Can be found at http://arxiv.org/abs/math/0601200.

 \bibitem[Oh-2]{Oh-local} Y.-G. Oh, {\em Locality of continuous Hamiltonian flows and Lagrangian intersections with the conormal of open subsets},
J. Gokova Geometry Topology 1 (2007), 1--32. Can be found at http://arxiv.org/abs/math/0612795.

 \bibitem[OM]{OM} Y.-G. Oh, S. M\"uller, {\em The group of Hamiltonian homeomorphisms and $C^0$ symplectic topology}, J. Symp. Geom., \textbf{5}, 2007 , 167--220.

 \bibitem[V]{V} C. Viterbo, {\em On the uniqueness of generating Hamiltonian for continuous limits of Hamiltonians flows}, International Mathematics Research Notices 2006, (05/2006), Article ID34028.




\end{thebibliography}
\end{document}